\definecolor{modra3}{rgb}{.1,.0,.4}
\def\indep{{\mathcal{I}}}
\def\aa{{\mathscr{A}}}
\def\ceil#1{{\lceil{#1}\rceil}}
\def\Tg{{{\text{\rm tg}}}}
\def\ceil#1{\lceil{#1}\rceil}
\newcommand*{\Scale}[2][4]{\scalebox{#1}{\ensuremath{#2}}}%
\def\OL#1{{\overline{#1}}}
\def\HH{{\mathcal H}}
\def\XX{{\mathcal X}}
\def\ZZ{{\mathbb Z}}
\newtheorem{theorem}{Theorem} 
\newtheorem{theorem*}{Theorem} 
\newtheorem{proposition}[theorem]{Proposition} 
\newtheorem{corollary}[theorem]{Corollary}
\newtheorem{lemma}[theorem]{Lemma}
\newtheorem{conjecture}[theorem]{Conjecture}
\newtheorem{question}[theorem]{Question}
\definecolor{modra}{rgb}{0,0,.8}
\definecolor{seda}{rgb}{.7,.7,.7}
\definecolor{piros}{rgb}{.8,0,0}
\definecolor{zelen}{rgb}{0,.5,0}
\def\jk#1{{\color{modra} {\sc JK: }{\sf #1} }}
\def\gs#1{{\color{zelen} {\sc JS: }{\sf #1} }}
\def\jk#1{}
\def\gs#1{}
\begin{document}


\title[Thrackles on nonplanar surfaces]{Thrackles on nonplanar surfaces}


\author[Hern\'andez-V\'elez]{C\'esar Hern\'andez-V\'elez}
\address{Facultad de Ciencias, Universidad Aut\'onoma de San Luis Potos\'{\i}, SLP 78000, Mexico}
\email{\tt cesar.velez@uaslp.mx}

\author[Kyn\v{c}l]{Jan Kyn\v{c}l}
\address{Department of Applied Mathematics, Faculty of Mathematics and Physics, Charles University, Prague, Czech Republic}
\email{\tt kyncl@kam.mff.cuni.cz}

\author[Salazar]{Gelasio Salazar}
\address{Instituto de F\'\i sica, Universidad Aut\'onoma de San Luis Potos\'{\i}, SLP 78000, Mexico}
\email{\tt gelasio.salazar@uaslp.mx}

\keywords{Thrackle, Conway's thrackle conjecture, surface}



\date{\today}
\def\W#1{{\widehat{#1}}}
\def\solv#1#2{{{#1}\rightarrowtail{#2}}}
\def\Th#1{{S_{g_{#1}}^{\bullet\bullet}}}

\begin{abstract}
A {\em thrackle} is a drawing of a graph on a surface such that (i) adjacent edges only intersect at their common vertex; and (ii) nonadjacent edges intersect at exactly one point, at which they cross. Conway conjectured that if a graph with $n$ vertices and $m$ edges can be thrackled on the plane, then $m\le n$. Conway's conjecture remains open; the best bound known is that $m\le 1.393n$. {Cairns and Nikolayevsky extended} this conjecture to the orientable surface $S_g$ of genus $g > 0$, claiming that if a graph with $n$ vertices and $m$ edges has a thrackle on $S_g$, then $m \le n + 2g$. We disprove this conjecture. In stark contrast with the planar case, we show that for each $g>0$ there is a connected graph with $n$ vertices and {$2n + 2g -8$} edges that can be thrackled on $S_g$. This leaves relatively little room for further progress involving thrackles on orientable surfaces, as every connected graph with $n$ vertices and $m$ edges that can be thrackled on $S_g$ satisfies that $m \le 2n + 4g - 2$. We prove a similar result for nonorientable surfaces. {We also derive nontrivial upper and lower bounds on the minimum $g$ such that $K_{m,n}$ and $K_n$ can be thrackled on $S_g$.}
\end{abstract}

\maketitle

\section{Introduction}\label{sec:intro}

All graphs under consideration are simple, that is, they have neither loops nor parallel edges. We use $S_g$ (respectively, $N_g$) to denote a compact orientable (respectively, nonorientable) surface of genus $g > 0$. Thus, for instance, $S_2$ is a double torus and $N_2$ is a Klein bottle. We recall that if $M$ and $M'$ are surfaces, then $M \# M'$ denotes the connected sum of $M$ and $M'$.

A {\em thrackle} is a drawing of a graph on some surface $S$ such that (i) adjacent edges only intersect at their common vertex; and (ii) nonadjacent edges intersect at exactly one point, at which they cross. If such a drawing exists, then the graph can be {\em thrackled} on $S$.

Let $G$ be a graph with $n$ vertices and $m$ edges. Conway conjectured that if $G$ can be thrackled on the plane, then $m \le n$.  Conway's conjecture has been proved only for some particular cases~\cite{miserehthrackles0,janosthrackles,cairnsouterplanar}, and several aspects on the structure of thrackles have been investigated~\cite{miserehthrackles,woodallthrackles,oswinthrackles}. Lov\'asz, Pach, and Szegedy proved that $m\le 2n-3$~\cite{lovaszthrackles}. This bound has been refined several times~\cite{goddynxu,fulekthrackles,cairnsbounds,fulekthrackles2}. The best bound currently known is $m \le 1.393n$, recently established by Xu~\cite{xuthrackles}. 

\subsection{Thrackles on higher genus surfaces}
Much less is known for thrackles on nonplanar surfaces. Croft, Falconer, and Guy~\cite{croftthrackles} noted that the maximum possible value of $m - n$ should depend on the genus of the surface, but did not offer any explicit conjecture. 

{The first exhaustive investigation of thrackles on surfaces of higher genus was carried out by Cairns and Nikolayevsky~\cite{cairnsbounds}. In that paper they described a construction that shows the following.}

\begin{lemma}[\hglue -0.00001 cm{\cite[Section 4]{cairnsbounds}}]\label{lem:cn}
{If a graph with $n$ vertices and $m$ edges can be thrackled on a surface $M$, then there is a graph with $n+2$ vertices and $m+4$ edges that can be thrackled on $M\# S_1$.}
\end{lemma}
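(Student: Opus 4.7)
The plan is to fix a given thrackle $T$ of $G$ on $M$, choose an edge $e = ab$ of $T$, attach a handle to $M$ inside a small disk neighborhood of the image of $e$, and then insert two new vertices $u, v$ together with the four new edges $au$, $bu$, $av$, $bv$ (forming the $4$-cycle $a\,u\,b\,v$) so that the enlarged drawing is a thrackle on $M \# S_1$.

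First, I would fix a disk $D$ around $e$ so narrow that the only vertices of $T$ lying in $D$ are $a$ and $b$ and that every other edge of $T$ meets $D$ in disjoint arcs each crossing $e$ exactly once. I would then attach a handle to $M$ inside $D$, with both of its ends in parts of $D$ disjoint from $e$ and from all other edges, producing $M \# S_1$. The two new vertices $u$ and $v$ are placed on this handle on opposite sides of it.

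The heart of the construction is to draw the four new edges so that each one lives in a thin strip along $e$ together with a piece of the new handle, and so that three conditions are met: (a) each new edge makes a small spiral loop near each of $a$ and $b$ which crosses exactly once every original edge incident to the opposite endpoint (these are the edges that a new edge is nonadjacent to but that do not cross $e$); (b) along the strip, each new edge crosses every original edge that crossed $e$ exactly once and avoids crossings with original edges adjacent to it at $a$ or $b$; and (c) the nontrivial topology of the handle is used to create exactly one crossing in each of the two nonadjacent pairs $\{au, bv\}$ and $\{av, bu\}$, with no other crossings among the four new edges. Concretely, $u$ and $v$ can be positioned on the handle so that $au$ and $bv$ together traverse a longitude while $av$ and $bu$ together traverse a meridian, giving a single crossing in each pair.

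The main obstacle is producing a routing that realizes (a), (b), and (c) simultaneously. One must ensure that two new edges which run parallel along $e$, such as $au$ and $av$, do not accidentally cross each other at the spiral loops or in the strip, and that the handle crossings between the nonadjacent new pairs do not spawn additional crossings with the other two new edges or with old edges that already cross $e$ inside $D$. Once such a local routing is exhibited, the verification that the whole drawing is a thrackle on $M \# S_1$ reduces to a short case analysis over pairs of edges: original--original pairs are unchanged from $T$; original--new pairs are handled by (a) and (b); and new--new pairs are handled by (c).
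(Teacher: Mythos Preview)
Your plan is viable in spirit, but it diverges from the paper's construction in an important way, and the one place where you are vague is exactly the place where the two approaches differ.

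The paper (following Cairns and Nikolayevsky) does \emph{not} attach a $4$-cycle through the chosen edge. Starting from an edge $st$, it adds two new vertices $p,q$ and the four edges $pq, ps, pt, qt$. In this graph $p$ has degree~$3$ and $q$ has degree~$2$; among the four new edges there is only \emph{one} nonadjacent pair, namely $\{ps, qt\}$. Thus the handle has to manufacture only a single new--new crossing, and the rest of the routing (in a rectangle $Q$ containing $st$ together with a piece of every edge of $G$) is governed by the figure: $pq$, having both endpoints new, is the edge that must cross every old edge, and the rectangle/handle layout makes this straightforward.

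Your choice of edges $au, bu, av, bv$ forms a $4$-cycle, so there are \emph{two} nonadjacent new--new pairs, $\{au,bv\}$ and $\{av,bu\}$, and the handle must create both crossings. Your sentence ``$au$ and $bv$ together traverse a longitude while $av$ and $bu$ together traverse a meridian'' does not do this: a longitude and a meridian meet in a single point, which accounts for one crossing between one arc from each pair, not one crossing in \emph{each} pair. You also have to be careful that the spiral of, say, $au$ around $b$ does not pick up the new edges $bu, bv$ emanating from $b$, since $au$ and $bu$ are adjacent at $u$. None of this is fatal --- a thrackled $C_4$ on the torus exists and can be grafted onto the strip --- but your step~(c) is the whole difficulty and is not yet a construction.

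In short: the paper's edge set $\{pq,ps,pt,qt\}$ trades your two handle crossings for one, at the price of making $pq$ cross every old edge; the rectangle $Q$ is set up precisely so that this is easy. If you want to keep your $4$-cycle version, you need to actually exhibit the local picture on the handle producing both required crossings and none of the forbidden ones.
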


{Their construction is illustrated in Figure~\ref{fig:930}. We start by taking an arbitrary edge $st$ of $G$. By performing a self-homeomorphism on $M$, if necessary, without loss of generality we may assume that there is a rectangle $Q$ that contains $st$ and such that (i) $Q$ contains a connected part of each edge of $G$; (ii) the only vertices contained in $Q$ are $s$ and $t$; and (iii) the only crossings contained in $Q$ are the crossings of $st$. We may assume that the edges incident with $s$ cross the top side of $Q$, the edges incident with $t$ cross the bottom side of $Q$, and each edge that crosses $st$ intersects $Q$ on its left side and on its right side.}

\def\inca{{\Scale[3]{\text{\rm (a)}}}}
\def\incb{{\Scale[3]{\text{\rm (b)}}}}
\def\te#1{{\Scale[4.2]{#1}}}
\def\tf#1{{\Scale[4]{#1}}}
\def\tg#1{{\Scale[5.5]{#1}}}
\def\otg#1{{\Scale[6.5]{#1}}}
\def\somea{{\Scale[4]{\text{\rm (a)}}}}
\def\someb{{\Scale[4]{\text{\rm (b)}}}}
\def\Za{{\Scale[4.0]{E_{u\OL{v}}}}}
\def\Zb{{\Scale[4.0]{E_{\OL{u}\OL{v}}}}}
\def\Zc{{\Scale[4.0]{E_{\OL{u}{v}}}}}
\def\Rec{{\Scale[6.0]{\rho}}}
\begin{figure}[ht!]
\centering
\scalebox{0.13}{\input{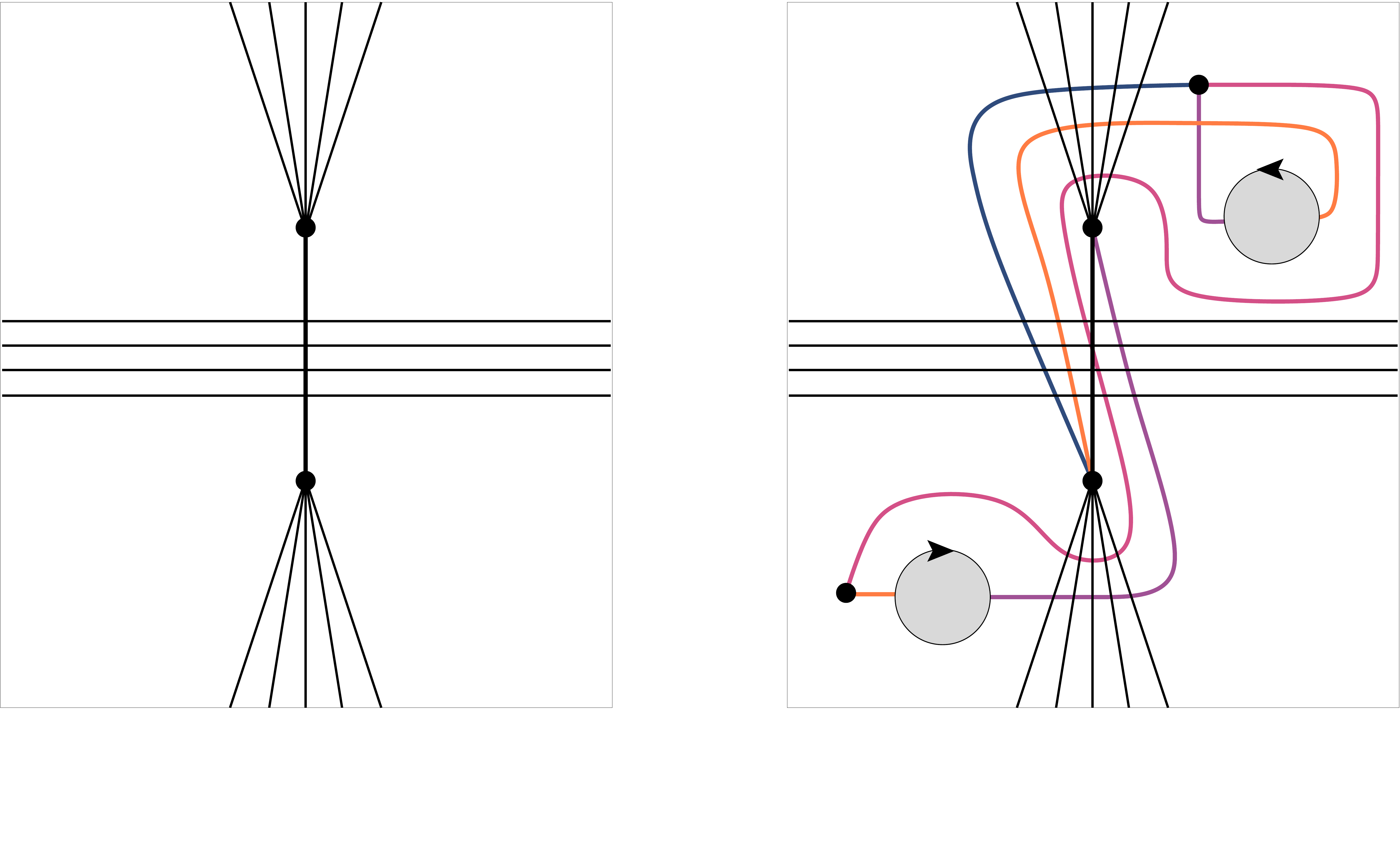_t}}
\caption{Illustration of the Cairns--Nikolayevsky construction. In (b) the handle is obtained by removing the gray open disks and identifying their boundary circles using the orientations shown.}
\label{fig:930}
\end{figure}

{Thus, without loss of generality, we may assume that the layout is as illustrated in Figure~\ref{fig:930}(a). As we illustrate in Figure~\ref{fig:930}(b), with the inclusion of a new handle it is possible to add two vertices $p$ and $q$ and four edges $pq, pt, ps$, and $qt$, while keeping the thrackle property. Thus we obtain a thrackle on $M\# S_1$ of a graph with two more vertices and four more edges than $G$, as claimed in Lemma~\ref{lem:cn}.}

{It is well-known that every odd cycle can be thrackled on the plane (equivalently, the sphere); see for instance the thick $7$-cycle in Figure~\ref{fig:700}. A recursive use of Lemma~\ref{lem:cn}, taking as starting point a planar thrackle of any odd cycle, implies that for any odd integer $n\ge 3$ and every integer $g \ge 0$ there exist graphs with $n$ vertices and $n + 2g$ edges that can be thrackled on $S_g$. Cairns and Nikolayevsky conjectured that this was the maximum number of edges on a thrackle on $S_g$:}

\begin{conjecture}[{\hglue -0.01 cm\cite[Conjecture 2]{cairnsbounds}}]\label{con:conj1}
If a graph $G$ with $n$ vertices and $m$ edges can be thrackled on $S_g$, {then} $m \le n + 2g$.
\end{conjecture}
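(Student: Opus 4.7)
Since the abstract announces that this conjecture is in fact false, my task is to sketch a \emph{disproof} rather than a proof: the goal is to construct, for each $g>0$, a thrackle on $S_g$ with more than $n+2g$ edges. The Cairns--Nikolayevsky construction (Lemma~\ref{lem:cn}) is exactly as efficient as the conjecture allows, producing $n+2g$ edges when iterated from an odd-cycle planar thrackle; any disproof must use each handle more economically, adding strictly more than $4$ edges per $2$ new vertices.

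The natural route is to thicken the construction in Figure~\ref{fig:930}. There, the handle disks are cut from a rectangle $Q$ containing only the single edge $st$, so only the four edges $pq, ps, pt, qt$ can be added. If instead the handle is attached inside a region through which a \emph{bundle} of many existing edges runs in parallel, each new edge laid across the handle can be forced to cross each of those existing edges exactly once by being routed around the handle appropriately, and pairs of new edges can be made to cross exactly once on the handle itself. If this succeeds, one handle contributes on the order of the bundle size in new edges rather than just four, so iterating on disjoint patches of the surface should produce families with $m\approx 2n+2g$, consistent with the $2n+2g-8$ target from the abstract. An essentially equivalent formulation is to exhibit a single thrackle on $S_{g_0}$ with a large surplus $m-n-2g_0$ and then apply Lemma~\ref{lem:cn} to transport that surplus to arbitrary genus.

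The main obstacle is verifying the thrackle property globally: attaching a handle alters the homotopy class of every edge-arc passing through the attaching rectangle, and the parity of intersections between two such edges can flip in subtle ways, creating spurious second crossings or destroying required ones. I would control this by recording, for each edge, the cyclic sequence of handle circles it traverses and checking ``exactly one crossing'' inductively as each new edge is placed, using intersection numbers in homology rather than pictures. A secondary but probably more creative challenge is engineering the base planar drawing so that it is a genuine thrackle \emph{and} contains the required bundle of parallel crossing edges inside a single rectangle; planar thrackles are known to be sparse, so this may force the starting graph to be rather special.
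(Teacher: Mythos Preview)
Your ``alternative formulation'' --- exhibit a single thrackle on some $S_{g_0}$ with large surplus $m-n-2g_0$ and then propagate by Lemma~\ref{lem:cn} --- is correct and is essentially what the paper does, with $g_0=1$. But you have the emphasis inverted: the base case you call a ``secondary but probably more creative challenge'' is the entire content of the disproof. Both Lemma~\ref{lem:cn} and the paper's own handle operation (Lemma~\ref{lem:or2}) preserve $m-n-2g$ exactly, so no post-hoc handle-attaching generates any surplus at all; it must already be present before the first handle is added. Your hope that a richer attachment region lets ``one handle contribute on the order of the bundle size in new edges'' is not substantiated, and in fact the paper never attempts this --- its star-cloning lemma adds one vertex and $2t+1$ edges at the cost of $t$ handles, which is again surplus-neutral.

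The paper manufactures the surplus by an explicit picture on the torus (Lemma~\ref{lem:or1}): for each $k\ge 1$ it draws on $S_1$ a thrackle of a bipartite graph $G_k$ (a $2k$-vertex path together with two extra vertices $a,b$, with $a$ joined to the odd-indexed path vertices and $b$ to the even-indexed ones), having $2k+2$ vertices and $4k-1$ edges, hence surplus $2k-5$. There is no planar starting point and no inductive handle argument behind this; it is a single drawing verified by inspection. Your instinct to look for a planar base is misdirected for precisely the sparsity reason you flag: a planar thrackle with positive surplus would already violate Conway's conjecture.

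One further point of contrast: the ``bundle'' the paper exploits in its handle lemma is not a packet of parallel crossing edges inside a rectangle, but simply the star at a high-degree vertex $u$. A new vertex $v$ is placed nearby, handles are inserted among the edges $e_i$ leaving $u$, and each new edge $f_i$ from $v$ to a neighbour $i$ of $u$ is routed through the handles and then \emph{alongside} $e_i$ all the way to $i$, so that it automatically inherits every crossing $e_i$ has with the rest of the graph. This inheritance trick is what makes the global verification routine rather than the delicate homological bookkeeping you anticipate.
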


Conjecture~\ref{con:conj1} was proved in~\cite{cairnsbounds} for all graphs with five or fewer vertices, with the possible exception of $K_5$. Aiming to offer further support for Conjecture~\ref{con:conj1}, Cairns, McIntyre, and Nikolayevsky~\cite{cairnsk5} claimed to show that $K_{3,3}$ cannot be thrackled on the torus $S_1$. {However, this statement is incorrect}: as we illustrate in Figure~\ref{fig:130}, it is possible to thrackle $K_{3,3}$ on $S_1$.

\def\tf#1{{\Scale[2.8]{#1}}}
\begin{figure}[ht!]
\centering
\def\tf#1{{\Scale[2.2]{#1}}}
\def\tg#1{{\Scale[2.5]{#1}}}
\def\somea{{\Scale[2.5]{\text{\rm (a)}}}}
\def\someb{{\Scale[2.5]{\text{\rm (b)}}}}
\scalebox{0.28}{\input{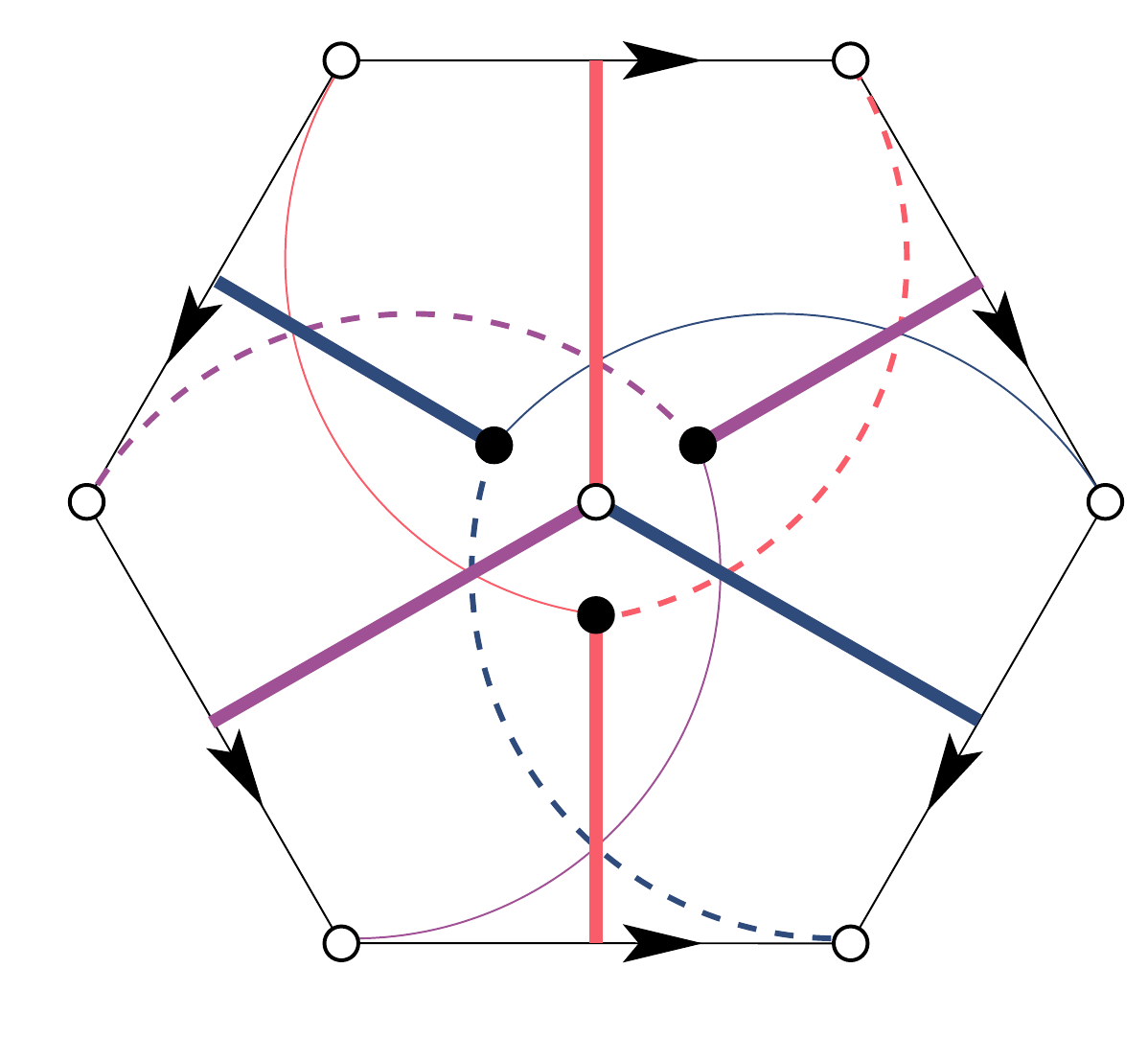_t}}
\caption{A thrackle of $K_{3,3}$ on the torus, represented as a hexagon with its three pairs of opposite sides identified. The chromatic classes of $K_{3,3}$ are $\{A,B,C\}$ and $\{1,2,3\}$.}
\label{fig:130}
\end{figure}

\subsection{Our results}

We establish the following theorem, which in particular disproves Conjecture~\ref{con:conj1} for every $g>0$. 

\begin{theorem}\label{thm:mainor}
{For each integer $g\ge 1$ there exist arbitrarily large connected graphs with $n$ vertices and $m=2n + 2g - 8$ edges that can be thrackled on $S_g$.}
\end{theorem}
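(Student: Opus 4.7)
The plan is to prove Theorem~\ref{thm:mainor} by exhibiting, for each $g\ge 1$, an infinite family of connected thrackles on $S_g$ with $n$ vertices and $m = 2n + 2g - 8$ edges. The key invariant to track is $q := m - 2n - 2g$, which the target identity pins to $q = -8$. The proof factors into (i) a base construction achieving $q = -8$ on $S_g$ at some initial value of $n$, and (ii) a local growth operation that preserves $q$ while adding one vertex and two edges.

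For the base on small $g$, the natural starting point is the $K_{3,3}$ thrackle on the torus $S_1$ from Figure~\ref{fig:130}, which has $n = 6$ and $m = 9$, so $q = -5$. Two basic operations drive $q$ downward: an application of Lemma~\ref{lem:cn}, which sends $(n, m, g) \mapsto (n+2, m+4, g+1)$ and decreases $q$ by $2$; and the deletion of a single edge while preserving connectivity, which decreases $q$ by $1$. For $g = 1$ one deletes $3$ edges from the $K_{3,3}$ thrackle; for $g = 2$, one applies Lemma~\ref{lem:cn} once and then deletes a single edge. For $g\ge 3$ these two operations alone are insufficient, since they can only drive $q$ from $-5$ to $-5 - 2k - r$ with $k, r \ge 0$, reaching $-8$ only for $2k + r = 3$. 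I would therefore supplement the argument with a tailored seed thrackle on a higher-genus surface---for instance, by adapting the polygon representation of $S_g$ that underlies Figure~\ref{fig:130}, or by combining the $K_{3,3}$ construction with additional handle-based modifications---chosen so that Lemma~\ref{lem:cn} and edge deletions bring $q$ to exactly $-8$.

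The growth step adds a new vertex $w$ of degree $2$ to the current thrackle by a \emph{bubble insertion} near a chosen edge $uv$. The two new edges $wu$ and $wv$ are drawn as thin arcs hugging $uv$; each reproduces the crossings of $uv$ with edges non-incident to $\{u, v\}$, and picks up additional crossings with edges incident to the opposite endpoint in a small disk around it. If the local rotation of edges at $u$ and $v$ accommodates this routing, the result is again a thrackle on $S_g$ with one more vertex and two more edges, so $q$ is unchanged and the graph stays connected. Iterating the growth step starting from the base therefore produces connected thrackles on $S_g$ with $n$ arbitrarily large and $m = 2n + 2g - 8$, which is the desired conclusion.

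The main obstacle lies in the growth step. An arbitrary edge in a thrackle need not admit a valid bubble insertion, because the crossings picked up near the endpoints may conflict with the thrackle condition. I would address this by strengthening the induction with an auxiliary invariant: at every stage the thrackle contains a designated \emph{bubble-ready} edge at whose endpoints the rotation of incident edges leaves enough room for the two new arcs, and each bubble insertion itself creates such an edge for the next iteration. Establishing and propagating this invariant requires a careful combinatorial analysis of the rotation system around the base drawing, together with a verification that the invariant is preserved under Lemma~\ref{lem:cn} applications and under subsequent bubble insertions; this, rather than the algebra of edge counts, is the technical heart of the argument.
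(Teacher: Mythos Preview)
Your proposal has two genuine gaps, one in each of your steps, and the paper's proof avoids both by organizing the argument very differently.

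\textbf{The base construction does not reach $q=-8$ for $g\ge 3$.} Starting from $K_{3,3}$ on $S_1$ with $q=-5$, each application of Lemma~\ref{lem:cn} forces $g\mapsto g+1$ and $q\mapsto q-2$, and edge deletion gives $q\mapsto q-1$. To land on genus exactly $g$ you must apply Lemma~\ref{lem:cn} exactly $g-1$ times, after which $q=-5-2(g-1)=-2g-3\le -9$ for $g\ge 3$; further deletions only push $q$ lower. You cannot recover, since none of your operations increases $q$. Your fallback---``a tailored seed thrackle on a higher-genus surface''---is not a construction; it is precisely the missing ingredient. In effect, to make your scheme work you would need, for each $g$, a thrackle on $S_1$ with $q=2g-10$, i.e.\ $m=2n+2g-8$ already on the torus, which for large $g$ approaches the extremal bound of Theorem~\ref{thm:upgendisc1} and is far from anything you have in hand.

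\textbf{The bubble insertion is not shown to preserve the thrackle property.} If $w$ is placed near $uv$ and $wu,wv$ hug $uv$, then $wu$ must cross every edge incident to $v$ other than $uv$, yet it may not cross $uv$ itself (they share $u$); symmetrically for $wv$ near $u$. Arranging both simultaneously, without $wu$ and $wv$ crossing each other, is not a local matter---it depends on the full rotation at both endpoints and on where along $uv$ each foreign edge crosses. Your proposed ``bubble-ready'' invariant is a name for the obstruction, not a resolution of it; you give no edge for which the insertion provably works, nor any reason why the operation produces another such edge.

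The paper sidesteps both issues. Instead of a single seed per genus, it builds an \emph{infinite family} $G_k$ of thrackles on $S_1$ (Lemma~\ref{lem:or1}), so ``arbitrarily large $n$'' is handled already at genus~$1$ with no growth step needed. Then, to pass from $S_1$ to $S_g$, it uses a single application of a \emph{star-cloning} lemma (Lemma~\ref{lem:or2}): given a vertex $u$ of degree at least $2t+1$, one adds $t$ handles and a new vertex adjacent to $2t+1$ neighbours of $u$. Since $G_k$ has a vertex of degree $k$, taking $t=g-1$ and $k\ge 2g-1$ yields a thrackle on $S_g$ with exactly $m=2n+2g-8$. The point is that the high-degree vertex in the base family lets one add many edges with few handles in one shot, so neither a genus-by-genus seed nor a degree-$2$ growth mechanism is required.
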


This theorem reveals a stark contrast with the planar case. Indeed, as we mentioned above, if a graph with $n$ vertices and $m$ edges can be thrackled on the plane then $m \le 1.393n$. 

{Theorem~\ref{thm:mainor} easily implies a similar result for nonorientable surfaces, using that $S_h \# N_1 = N_{2h+1}$ for each integer $h \ge 0$. Indeed, a thrackle $T$ of a graph $G$ on $S_h$ trivially yields a thrackle of $G$ on $S_h\# N_1=N_{2h+1}$: if we glue to $S_h$ a (superfluous) crosscap disjoint from $T$, then $T$ becomes a thrackle of $G$ on $N_{2h+1}$.}

{Using this idea we obtain for each odd integer $g \ge 3$ the existence of a graph with $n$ vertices and $m=2n + g - 9$ edges that can be thrackled on $N_g$. Moreover, by gluing a second superfluous crosscap we obtain for each even integer $g \ge 4$ the existence of a graph with $n$ vertices and $m=2n + g - 10$ edges that can be thrackled on $N_g$.}

{We shall exhibit thrackles on nonorientable surfaces that not only have (marginally) more edges than the thrackles in the previous paragraph but, more importantly, are more natural constructions that do not involve the addition of superfluous crosscaps. We prove the following.}

\begin{theorem}\label{thm:mainnon}
{For each integer $g\ge 1$ there exist arbitrarily large connected graphs with $n$ vertices and $m=2n + g - 4$ edges that can be thrackled on $N_g$.}
\end{theorem}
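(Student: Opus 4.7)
The proof follows the same general strategy as Theorem~\ref{thm:mainor}, with crosscaps playing the role of handles. The guiding arithmetic is that $m=2n+g-4$ on $N_g$ can be rewritten as $(2n-3)+(g-1)$, which suggests proving the result in two parts: (a) a dense base construction on the projective plane $N_1$ of a graph with $n$ vertices and $2n-3$ edges for arbitrarily large $n$; and (b) a crosscap-attachment lemma which, given a thrackle of a graph on a surface $M$, produces a thrackle on $M\#N_1$ on the same vertex set with exactly one additional edge.

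The first ingredient is the base thrackle on $N_1$. I would construct it by induction on $n$: starting from a small base thrackle (for instance $K_3$ on $N_1$ with three vertices and three edges, or a planar analog of the $K_{3,3}$ torus thrackle in Figure~\ref{fig:130}), we iteratively attach a new vertex $v$ to two existing vertices by means of two new edges, routed across $N_1$ so that each new edge crosses every existing nonadjacent edge exactly once and does not cross any edge incident to its own endpoint. Each iteration adds one vertex and two edges, so the ratio $m=2n-3$ is preserved. The crosscap of $N_1$ provides precisely the topological flexibility needed for this routing to be feasible at every step; on the plane, by Conway's conjecture, such an indefinite iteration should fail.

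The second ingredient is the crosscap-attachment lemma, analogous to Lemma~\ref{lem:cn}. Given a thrackle on $M$, we select a small disk $D$ meeting the thrackle in a controlled way (for instance in a single arc of a distinguished edge, playing a role similar to the rectangle $Q$ in Figure~\ref{fig:930}), remove $D$, and identify antipodal points on its boundary to obtain a thrackle on $M\#N_1$. The antipodal identification allows us to insert one additional edge between two existing vertices, routed through the crosscap so that it crosses every other nonadjacent edge exactly once. Combining the base thrackle on $N_1$ with $g-1$ successive applications of this lemma yields a thrackle on $N_g$ of a graph with $n$ vertices and $2n-3+(g-1)=2n+g-4$ edges, proving Theorem~\ref{thm:mainnon}.

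The main obstacle is the geometric verification in both ingredients. In the base construction, the iterated addition of a vertex with two new edges on $N_1$ must be shown to avoid being obstructed by previously placed edges; this requires a systematic choice of routes through the crosscap and a careful bookkeeping of which existing edges must (and must not) be crossed. In the crosscap-attachment lemma, the orientation-reversing nature of the crosscap forces certain parity constraints on crossings, so the new edge must be inserted so as to satisfy these constraints while producing exactly one crossing with each nonadjacent edge. As in the handle case, these verifications are what make the construction nontrivial; the overall combinatorial framework parallels the proof of Theorem~\ref{thm:mainor}.
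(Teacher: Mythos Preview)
Your overall two-step plan (a dense base family on $N_1$, plus an extension lemma that trades crosscaps for extra edges) matches the paper's architecture, but neither ingredient is actually carried out, and the specific forms you propose differ from the paper's in ways that matter.

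For the base family, the paper does not build it inductively. It gives a single explicit picture: the odd wheel $W_k$ thrackled on $N_1$ (a thrackled odd $k$-cycle with a new apex joined through the crosscap to all $k$ rim vertices). This yields $n=k+1$ vertices and $2k=2n-2$ edges, in one shot, for every odd $k$. Your inductive scheme (``add one vertex and two edges, repeat'') is only asserted; you give no reason why, after many rounds, the two new edges can still be routed to cross exactly the right set of existing edges. The sentence ``the crosscap of $N_1$ provides precisely the topological flexibility needed'' is exactly the claim that requires proof, and it is not obvious: after many iterations the picture is complicated and you only have one crosscap to work with. The wheel construction sidesteps all of this.

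For the extension step, the paper's lemma is not ``one crosscap buys one extra edge between existing vertices.'' It is: if $G$ has a vertex $u$ of degree at least $t+1$, then by attaching $t$ crosscaps near $u$ one can add a \emph{new} vertex $v$ together with $t+1$ edges from $v$ to neighbours of $u$, keeping the thrackle property. The point is locality: $v$ is placed next to $u$, and each new edge $f_i$ is routed through the first $i$ crosscaps and then follows the existing edge $e_i$ closely, so it inherits all of $e_i$'s crossings outside the local patch. Your version, adding an edge between two already existing vertices via a single local crosscap, has no such mechanism: the new edge would need to cross edges spread across the whole surface, and a crosscap localized in a small disk does not obviously help with that. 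This is the real gap in your sketch.

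In short, your decomposition $m=(2n-3)+(g-1)$ is fine arithmetically, but the two lemmas you would need are not the ones the paper proves, and you have not supplied proofs of your versions. The paper's choices (the explicit wheel on $N_1$, and cloning a star by placing a new vertex near a high-degree vertex) are what make the verifications tractable.
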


We prove Theorem~\ref{thm:mainnon} first, and then we prove Theorem~\ref{thm:mainor}. We proceed in this order for two reasons: the proof of Theorem~\ref{thm:mainnon} is slightly easier, and this is the order in which we came up with these constructions.

In view of the next statement, Theorems~\ref{thm:mainor} and~\ref{thm:mainnon} seem to leave relatively little room for further progress related to thrackles on (orientable or nonorientable) nonplanar surfaces. %

\begin{theorem}\label{thm:upgendisc1}
Let $g>0$ be an integer. If a connected graph $G$ with $n$ vertices and $m$ edges can be thrackled on $S_g$, then $m \le 2n + 4g - 2$. If $G$ can be thrackled on $N_g$, then $m \le 2n + 2g - 2$. 
\end{theorem}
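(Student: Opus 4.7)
The plan is to apply Euler's formula to the planarization of the thrackle, together with a face-length analysis exploiting the thrackle axioms.

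Given a thrackle of $G$ on $S \in \{S_g, N_g\}$ with $X$ crossings, I would form the planarization $G^{\ast}$ by placing a new vertex of degree $4$ at each crossing and splitting the two crossing edges accordingly, so that $V(G^{\ast}) = n + X$ and $E(G^{\ast}) = m + 2X$. After restricting to the carrier subsurface of $G^{\ast}$ if necessary---an operation that only decreases $|\chi(S)|$ and hence only strengthens the bound to be proved---the embedding of $G^{\ast}$ on $S$ can be taken to be cellular.

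My first step would be to establish that no face of $G^{\ast}$ is a monogon or a digon. Monogons are immediately excluded because every edge of $G^{\ast}$ joins two distinct points of the drawing. A digonal face would force two edges of $G^{\ast}$ to share both endpoints, which, depending on whether each endpoint is a vertex of $G$ or a crossing, violates either the simplicity of $G$ (no parallel edges) or the thrackle condition (adjacent edges meet only at their common vertex, non-adjacent edges cross exactly once). Combining this face bound with Euler's formula already yields $m \le 3n + X - 3\chi(S)$, but this is far too weak, since $X$ can be quadratic in $m$.

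The main step, and the main obstacle, is to sharpen this face bound. The thrackle condition gives tight constraints on the triangular faces of $G^{\ast}$: such a face is bounded by three arcs coming either from three pairwise non-adjacent edges of $G$ pairwise crossing in a rigidly constrained global configuration, or involves at least one vertex of $G$ at a corner (consuming local angular budget there). A charging argument balances a global count of the first type against the local budget $\sum_{v} d(v) = 2m$ for the second type, sharpening the face inequality to
\[
2E(G^{\ast}) \ge 4F(G^{\ast}) + 2\chi(S) - 4.
\]
Plugging this back into $V(G^{\ast}) - E(G^{\ast}) + F(G^{\ast}) = \chi(S)$ and solving for $m$ gives $m \le 2n - 2\chi(S) + 2$, which specializes to $m \le 2n + 4g - 2$ on $S_g$ (where $\chi(S_g) = 2-2g$) and to $m \le 2n + 2g - 2$ on $N_g$ (where $\chi(N_g) = 2-g$), as required.

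The hard part is combining the two types of constraints on triangular faces into a single discharging scheme yielding exactly the constants appearing in the claimed bounds. The orientable and nonorientable cases proceed in parallel, the only change being the value of $\chi(S)$ plugged into the final calculation.
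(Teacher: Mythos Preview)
Your proposal has a genuine gap: the entire content of the theorem is hidden in the ``charging argument'' you allude to but never actually carry out. Observe first that your displayed face inequality is miscalculated. Plugging $2E(G^\ast)\ge 4F(G^\ast)+2\chi(S)-4$ into Euler's formula yields $m\le 2n-3\chi(S)+2$, which for $S_g$ gives $m\le 2n+6g-4$, not the claimed bound. What you actually need is $2E(G^\ast)\ge 4F(G^\ast)-4$; a short computation shows that, via Euler's formula and the identities $V(G^\ast)=n+X$, $E(G^\ast)=m+2X$, this inequality is \emph{equivalent} to the conclusion $m\le 2n-2\chi(S)+2$. So your ``main step'' is not a step toward the theorem --- it \emph{is} the theorem, restated. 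Ruling out monogons and digons in $G^\ast$ only gives $2E(G^\ast)\ge 3F(G^\ast)$, and triangular faces bounded by three mutually crossing edges are abundant in thrackles, so the promised discharging scheme would have to do all the work. You acknowledge this yourself (``the hard part is combining the two types of constraints\ldots''), but no such scheme is supplied, and it is far from clear that one exists with the right constants.

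The paper proceeds along a completely different route that sidesteps the planarization entirely. It invokes the characterization of Pelsmajer, Schaefer, and \v{S}tefankovi\v{c}: $G$ has a generalized thrackle on $M$ if and only if $G$ has an $X$-parity embedding on $M\# N_1$. Every $X$-parity embedding is even, so $G$ itself (not $G^\ast$) has an even embedding on $M\# N_1$. Passing to a cellular even embedding of each component on a subsurface, every facial walk has length at least~$4$ because $G$ is simple and connected with at least three vertices. Euler's formula applied to this embedding of $G$ then gives $m\le 2n-4+2\varepsilon(M\# N_1)=2n+2\varepsilon(M)-2$ directly, with no discharging needed. The key idea you are missing is precisely this transformation of the thrackle into an even embedding of the original graph on a slightly more complicated surface.
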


The proof of Theorem~\ref{thm:upgendisc1} is given in Section~\ref{sec:upper}. As we shall see, the part of the theorem for orientable surfaces was proved by Cairns and Nikolayevsky (see~\cite[Corollary 1]{cairnsgeneralized}). 

The part that concerns nonorientable surfaces {will follow by combining an argument developed by Cairns and Nikolayevsky~\cite{cairnsbounds} and a result proved by Pelsmajer, Schaefer, and \v{S}tefankovi\v{c}~\cite{pelsmajereven}}. As it happens, both the orientable and nonorientable parts will follow as consequences of a single statement (namely {Theorem~\ref{thm:upgendisc}}) that involves, as in~\cite{cairnsbounds}, thrackles of graphs that are not necessarily connected. Moreover, as in~\cite{cairnsbounds} and~\cite{cairnsgeneralized}, this result applies in the wider context of {\em generalized} thrackles, a notion we shall review at the beginning of Section~\ref{sec:upper}.

{We conclude this paper with some results on the thrackle genus of complete and complete bipartite graphs. We recall that the {\em thrackle genus\,} $\Tg(G)$ of a graph $G$ is the smallest $g$ such that $G$ can be thrackled on $S_g$. As we note in Section~\ref{sec:final}, using results by Cairns and Nikolayevsky and a classical result by Ringel it is easy to derive nontrivial lower bounds on $\Tg(K_{m,n})$ and $\Tg(K_n)$. As we also prove in that section, a construction used in the proof of Theorem~\ref{thm:mainor} can be easily adapted to yield upper bounds that are roughly twice as large as these lower bounds.}


\section{Proof of Theorem~\ref{thm:mainnon}}\label{sec:proofnonorientable}

Theorem~\ref{thm:mainnon} is an easy consequence of the next two lemmas. For the first one we recall that for each integer $k\ge 3$ the $k$-{\em wheel} $W_k$ is the graph that consists of a $k$-cycle $C_k$ plus a vertex $a$ joined to all the vertices in $C_k$. The edges in $C_k$ are the {\em rim} edges, and the edges incident with $a$ are the {\em spokes}.

\begin{lemma}\label{lem:no1}
For each odd integer $k \ge 3$ the wheel $W_k$ can be thrackled on $N_1$.
\end{lemma}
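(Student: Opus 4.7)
The plan is to give an explicit construction of a thrackle of $W_k$ on the projective plane $N_1$, by combining the classical star polygon thrackle of the odd cycle $C_k$ with a careful routing of the spokes through the crosscap. I will represent $N_1$ as a closed disk $D$ with antipodal identification on the boundary $\partial D$, place the hub $a$ at the centre of $D$, and place the rim vertices $v_1,\ldots,v_k$ on $\partial D$ so that, in the cyclic boundary order, any two consecutive vertices $v_i,v_{i+1}$ of the rim cycle are spaced $(k-1)/2$ steps apart. The straight chords of $D$ joining $v_i$ to $v_{i+1}$ then form the $\{k/2\}$ star polygon, which is the classical thrackle of the odd cycle $C_k$; with the rim edges drawn this way, the rim part of the construction is already complete.

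The spokes $av_i$ are the difficult part. A straight radial segment from $a$ to $v_i$ crosses exactly one non-adjacent rim edge, namely the star chord whose triangular ``point'' region contains $v_i$. To produce the $k-3$ extra crossings that each spoke needs with the other non-adjacent rim edges, I instead draw each spoke as a curve that, after leaving $a$, exits $D$ through a boundary point roughly opposite to $v_i$, re-enters at the antipodally identified boundary point, and finally reaches $v_i$, arranging that along the way it passes through the triangular ``points'' whose cut-offs correspond to the remaining non-adjacent rim chords. Exploiting the $k$-fold rotational symmetry of the configuration, all $k$ spokes are rotated copies of one another, and their crosscap-wrapping portions can be placed in $k$ pairwise disjoint angular sectors near $\partial D$, so that two distinct spokes meet only at $a$.

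Verifying the thrackle property then reduces to three checks: (i) any two non-adjacent rim edges cross exactly once, which is immediate from the star polygon property; (ii) each spoke $av_i$ crosses each non-adjacent rim edge in exactly one crossing, which follows from identifying the triangular ``point'' regions visited by the spoke before and after it wraps around the crosscap; and (iii) distinct spokes meet only at $a$, which follows from the angular separation of their crosscap-wrapping portions.

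The principal obstacle is to carry out the bookkeeping in (ii) and (iii) simultaneously: the spokes must wrap around the crosscap enough to meet every non-adjacent rim edge, while remaining pairwise disjoint outside $a$. The non-orientability of $N_1$ is essential here, since an analogous construction in the plane cannot succeed---indeed $W_k$ is not planarly thrackleable for $k\ge 5$, as implied by the bound $m\le 1.393n$ for planar thrackles of Xu.
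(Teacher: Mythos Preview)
Your approach is essentially the same as the paper's: draw the rim $C_k$ as the standard star-polygon thrackle, and route the spokes through the crosscap using the $k$-fold rotational symmetry. The paper's proof consists of a single figure exhibiting such a thrackle of $W_7$ on $N_1$ (with the crosscap drawn as a small ``football'' region rather than as the antipodal boundary identification you use), together with the remark that the picture generalises to all odd $k\ge 3$.

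That said, your verbal description leaves real verification gaps that a figure would close. First, putting the rim vertices literally on $\partial D$ is awkward, since each such point has two pre-images and you must say from which side each incident edge approaches; it is cleaner to place them just inside the disk. Second, your crossing count is off for $k=3$ (the straight radius crosses no rim edge, not one), and for $k\ge 5$ the claim that a single exit ``roughly opposite $v_i$'' followed by re-entry near $v_i$ picks up exactly the $k-2$ required crossings is asserted rather than checked---the radius toward the antipode of $v_i$ passes through a pentagon \emph{vertex} in the symmetric position, and after perturbation one must actually trace which point-regions are visited. Third, the argument for (iii) only covers the portions ``near $\partial D$''; you still need to argue that the full spokes, which all emanate from the centre and may wind angularly before reaching the boundary, are rotates of a single curve lying in a sector of width $<2\pi/k$. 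None of these is fatal---the construction can be made to work---but as written the proposal is a plan rather than a proof, at roughly the same level of rigour as the paper's ``see the figure and generalise.''
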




\begin{lemma}[{Cloning a star, nonorientable version}]\label{lem:no2}
{Let $G$ be a graph that can be thrackled on some surface $M$, and let $t$ be a positive integer. If $G$ has a vertex of degree at least $t+1$, then it is possible to add one vertex and $t+1$ edges to $G$ so that the resulting graph can be thrackled on $M\# N_{t}$.}
\end{lemma}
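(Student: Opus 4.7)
The plan is to perform a local topological modification of the given thrackle near a vertex of high degree, in the same spirit as the Cairns--Nikolayevsky construction behind Lemma~\ref{lem:cn} but using $t$ crosscaps in place of a single handle. Let $T$ be the given thrackle of $G$ on $M$, let $v$ be a vertex with $\deg(v) = d \ge t+1$, and let $e_1,\ldots,e_d$ denote the edges incident with $v$ in cyclic order around $v$ in $T$, with $e_i = vu_i$. Select the first $t+1$ of these edges as the ones to ``clone''. Choose a disk $D\subset M$ around $v$ so small that $D\cap T$ consists only of $v$ and an initial segment of each edge $e_j$, so that $D\setminus T$ consists of $d$ open sectors.

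First I would identify exactly what each new edge must do. The new edge $f_i = pu_i$ has to cross every edge of $G$ not incident with $u_i$ exactly once, and must not cross any other $f_j$ (since they all share $p$). I would draw $f_i$ in two pieces: outside $D$ it runs parallel to $e_i$ all the way to $u_i$, and inside $D$ it forms a loop from $p$ encircling $v$ that crosses each $e_j$ with $j\ne i$ exactly once and avoids $e_i$. Outside $D$ the parallel routing inherits exactly the crossings of $e_i$ with edges of $G$ not incident with $v$ or $u_i$; inside $D$ the loop adds one crossing with each $e_j$ for $j\ne i$. Together these are precisely the edges of $G$ not incident with $u_i$, as required.

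Second I would perform the surface modification. Remove $t$ small pairwise disjoint open disks from a single sector of $D$ (say, the sector $\sigma_0$ between $e_d$ and $e_1$) and glue in $t$ Möbius caps; this changes the Euler characteristic by $-t$ and turns $M$ into $M\#N_t$. Place $p$ in $\sigma_0$, outside the crosscap disks. Draw $f_1$ as a simple arc that spirals once around $v$, crossing $e_d,e_{d-1},\ldots,e_2$ in order, and then exits $D$ alongside $e_1$. For $i = 2,\ldots,t+1$, route $f_i$ through the $(i-1)$-th crosscap: the nonorientability makes it possible for $f_i$ to continue around $v$ crossing precisely the spokes $e_j$ with $j\ne i$ while avoiding both $e_i$ and all previously drawn $f_1,\ldots,f_{i-1}$. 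Each $f_i$ then exits $D$ next to $e_i$ and runs parallel to $e_i$ to $u_i$.

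Finally I would verify the thrackle property: pairs of old edges are unchanged from $T$; the pairs $(f_i,f_j)$ share only $p$ by construction; and for any old edge $e$ and any new edge $f_i$, the crossing count above is either $0$ (when $e$ is incident with $u_i$ or $e = e_i$, all of which are adjacencies of $f_i$) or $1$ (in every other case). The main obstacle is the second step: rigorously describing how the $t$ crosscaps let the $t+1$ loops around $v$ be realised pairwise disjointly with precisely the prescribed patterns of crossings with the spokes. This is the topological heart of the argument; once it is in place, the verification is a routine crossing count, and the result is a thrackle of $G$ together with $p$ and the $t+1$ edges $pu_1,\ldots,pu_{t+1}$ on $M\#N_t$.
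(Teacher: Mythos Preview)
Your overall strategy---localise near a high-degree vertex, glue in $t$ crosscaps, add a new vertex $p$, and route $t+1$ new edges that run parallel to chosen old edges outside a small disk---is exactly the paper's approach. The gap is precisely where you suspect it is, and it is fatal as stated. You place all $t$ crosscaps in the sector $\sigma_0$, away from every spoke. But in the punctured disk $D\setminus\{v\}$ the mod~$2$ intersection number of an arc from $p$ to a fixed boundary point with each spoke $e_j$ depends only on the homotopy class of the arc; the only nontrivial loop (once around $v$) meets every spoke once, and the extra loops created by crosscaps inside $\sigma_0$ are supported in $\sigma_0$ and hence meet every spoke zero times. So the achievable parity vectors are unchanged by your crosscaps: for an exit in the sector between $e_{i-1}$ and $e_i$ they are $\{e_1,\ldots,e_{i-1}\}$ or its complement, and for an exit between $e_i$ and $e_{i+1}$ they are $\{e_1,\ldots,e_i\}$ or its complement. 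For $1<i<d$ none of these equals $\{e_j:j\neq i\}$, so $f_i$ cannot be drawn with the required crossings with the spokes at all, regardless of disjointness from the other $f_j$. (Already for $t=1$, $d=3$ there is no arc from $p\in\sigma_0$ exiting near $e_2$ that crosses $e_1$ and $e_3$ once each and misses $e_2$.)

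The paper repairs this by putting the crosscaps \emph{on} the spokes: crosscap $\mathcal{X}_i$ sits on $e_i$, and $e_i$ itself is rerouted through $\mathcal{X}_i$. The new edge $f_i$ then threads through $\mathcal{X}_1,\mathcal{X}_2,\ldots,\mathcal{X}_i$ in order (a nested scheme, not one crosscap per edge) before winding once around $u$ and running out alongside $e_i$; passing through $\mathcal{X}_j$ is what lets $f_i$ get to the other side of $e_j$ without crossing it, which is exactly the move your $\sigma_0$-placement cannot supply. The paper also connects the new vertex to neighbours $1,\ldots,t$ and $s$ (not $1,\ldots,t+1$) and specifies the cyclic order of the $f_i$ around the new vertex, so that their pairwise disjointness can be read off from the picture.
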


Before we prove these lemmas, we show that Theorem~\ref{thm:mainnon} is an easy consequence of them.

\begin{proof}[Proof of Theorem~\ref{thm:mainnon}, assuming Lemmas~\ref{lem:no1} and~\ref{lem:no2}]
{For $g=1$ the theorem claims the existence of arbitrarily large connected graphs with $n$ vertices and $m=2n+1 - 4=2n - 3$ edges that can be thrackled on $N_1$. For this case we note that Lemma~\ref{lem:no1} yields a slightly stronger statement, since $W_k$ has $k+1$ vertices and $2k$ edges, and $2k=2(k+1)-2$.}

Suppose now that $g \ge 2$ {is} an integer. Let $t:=g-1$, and let $k$ be any integer such that $k \ge t+1$. We know from Lemma~\ref{lem:no1} that $W_k$ can be thrackled on $N_1$. Since $W_k$ has a vertex of degree $k\ge t+1$, and it has $k+1$ vertices and $2k$ edges, it follows from Lemma~\ref{lem:no2} that there is a graph with $n:=(k+1) + 1=k+2$ vertices and $m:=2k + t + 1=2k+g$ edges that can be thrackled on $N_1\# N_{t}=N_1\# N_{g-1} =N_g$. We complete the proof by noting that {$m=2n + g -4$}, and that $n$ can be made arbitrarily large by making $k$ sufficiently large.
\end{proof}

\begin{proof}[Proof of Lemma~\ref{lem:no1}]
In Figure~\ref{fig:700} we illustrate a thrackle of $W_7$ on the projective plane $N_1$. It is straightforward to see that this drawing can be generalized to yield a thrackle of $W_k$ on $N_1$ for any odd integer $k\ge 3$.
\end{proof}

\def\tf#1{{\Scale[2.8]{#1}}}
\begin{figure}[ht!]
\centering
\def\tc#1{{\Scale[3]{#1}}}
\def\tf#1{{\Scale[2.2]{#1}}}
\def\tg#1{{\Scale[3]{#1}}}
\def\somea{{\Scale[2.5]{\text{\rm (a)}}}}
\def\someb{{\Scale[2.5]{\text{\rm (b)}}}}
\scalebox{0.25}{\input{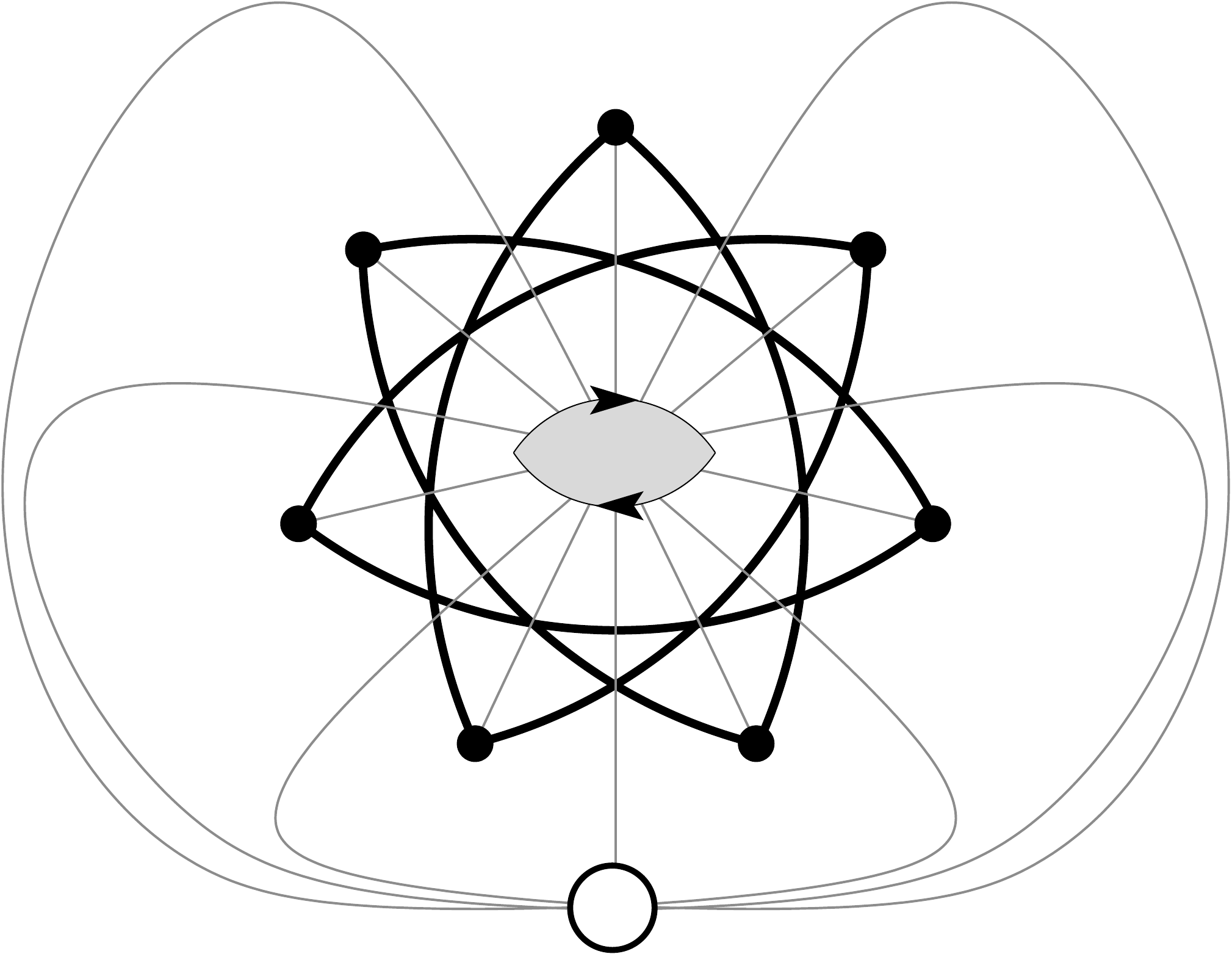_t}}
\caption{A thrackle of the wheel $W_7$ on the projective plane $N_1$. The rim edges are thick and the spokes are thin, and the crosscap is the gray region shaped like an American football: the projective plane is obtained by identifying the upper and lower arcs that bound the gray region, using the orientations shown.} 
\label{fig:700}
\end{figure}


\begin{proof}[Proof of Lemma~\ref{lem:no2}]
{Throughout the proof $t$ is a positive integer and $G$ is a graph thrackled on some surface $M$, where $G$ has a vertex $u$ of degree $s \ge t+1$. Our discussion involves arbitrary values of $t$ and $s$, while we illustrate with figures the case in which $t=5$ and $s=9$.} 

As we illustrate in {Figure~\ref{fig:715}}, we label {by} $1,2,\ldots,s$ the vertices adjacent to $u$, and we let $e_i$ denote the edge that joins $u$ to $i$, for $i=1,2,\ldots,s$. We choose the labelling so that the edges $e_1,e_2,\ldots,e_s$ leave $u$ in this {clockwise} cyclic order.

\def\tf#1{{\Scale[2.8]{#1}}}
\begin{figure}[ht!]
\centering
\def\tx#1{{\Scale[14]{#1}}}
\def\utf#1{{\Scale[8]{#1}}}
\def\tf#1{{\Scale[11]{#1}}}
\def\tix#1{{\Scale[12]{#1}}}
\def\tif#1{{\Scale[11.5]{#1}}}
\hglue -0.2cm\scalebox{0.05}{\input{715c.pdf_t}}
\caption{Illustration of the proof of Lemma~\ref{lem:no2} for the case $s=9$.}
\label{fig:715}
\end{figure}

{To prove the theorem we shall show that it is possible to add to $G$ one new vertex $v$ and $t+1$ new edges $f_1,f_2,\ldots,f_{t}, f_s$, so that the resulting graph can be thrackled on $M\# N_t$.}

Without loss of generality we may assume that the layout is as depicted in {Figure~\ref{fig:715}}, where $Q$ is the {rectangle}. In particular, as we also illustrate in that figure, all the crossings of $e_i$ for $i=1,2,\ldots,s$ lie outside $Q$ (in the figure, these crossings are outlined as small vertical segments very close to vertices $1,2,\ldots,s$).

We now proceed as illustrated in Figure~\ref{fig:745}. We start by gluing $t$ crosscaps $\XX_1,\XX_2,\ldots,\XX_t$ inside the {rectangle} $Q$, redrawing $e_i$ so that it goes through $\XX_i$ for $i=1,2,\ldots,t$. 

\def\tf#1{{\Scale[2.8]{#1}}}
\begin{figure}[ht!]
\centering
\def\hone{{{\mathcal H}_1}}
\def\htwo{{{\mathcal H}_2}}
\def\hthree{{{\mathcal H}_3}}
\def\tc#1{{\Scale[5.5]{#1}}}
\def\tx#1{{\Scale[6]{#1}}}
\def\utf#1{{\Scale[8]{#1}}}
\def\tf#1{{\Scale[5.4]{#1}}}
\def\tif#1{{\Scale[4.0]{#1}}}
\def\tig#1{{\Scale[7.0]{#1}}}
\def\tix#1{{\Scale[7.0]{#1}}}
\def\tg#1{{\Scale[3]{#1}}}
\hglue -0.2cm\scalebox{0.1095}{\input{745j.pdf_t}}
\caption{Cloning a star (nonorientable version).}
\label{fig:745}
\end{figure}

We then place a new vertex $v$ also inside $Q$, and for $i=1,2,\ldots,t$ and $s$ we join $v$ to vertex $i$ with an edge $f_i$. These are the dotted edges in Figure~\ref{fig:745}, and to help comprehension for $i=1,2,\ldots,t$ and $s$ the edge $f_i$ is drawn with the same colour as $e_i$.  If $t$ is odd (as in Figure~\ref{fig:745}), these edges leave $v$ in the {clockwise} cyclic order $f_2, f_4, \ldots, f_{t-3}, f_{t-1}, f_t, f_{t-2}, \ldots, f_3, f_1, f_s$. If $t$ is even, they leave $v$ in the cyclic order {$f_2, f_4, \ldots, f_{t-2}, f_t, f_{t-1}, f_{t-3}, \ldots, f_3, f_1, f_s$}.

As we illustrate in Figure~\ref{fig:745}, the key idea is that for $i=1,2,\ldots,t$, after leaving $v$ the edge $f_i$ goes through crosscaps $\XX_1,\XX_2,\ldots,\XX_i$ in this order, then crosses the edges $e_{i+1}, e_{i+2}, \ldots, e_s,e_1,e_2,\ldots,$ $e_{i-1}$ in this order, and finally is routed very close to $e_i$, so that just before it reaches vertex $i$ it crosses all the edges that cross $e_i$. It is straightforward to check that by drawing $f_i$ in this way we ensure that it crosses exactly once each edge incident with neither $v$ nor $i$, in agreement with the thrackle property.

{We finish the process by drawing $f_s$ so that it does not go through any of the crosscaps, crosses $e_1,e_2,\ldots,e_{s-1}$ in this order, and finally is routed very close to $e_s$, so that just before it reaches vertex $s$ it crosses all the edges that cross $e_s$. This guarantees that $f_s$ crosses exactly once each edge incident with neither $v$ nor $s$, in agreement with the thrackle property.}

{The final result of this construction is a thrackle on $M\# N_t$ of a graph that is obtained from $G$ by adding one vertex and $t+1$ edges, as required.}
\end{proof}

\section{Proof of Theorem~\ref{thm:mainor}}\label{sec:prooforientable}

Theorem~\ref{thm:mainor} follows from the next two statements. The first one involves the following family of graphs. For each integer {$k\ge 1$} let $G_k$ be the graph with vertex set $\{a,b,1,2,\ldots,2k\}$, where vertex $i$ is joined to vertex $i+1$ for $i=1,\ldots,2k-1$ (so that $(1,2,\ldots,2k)$ is a path), $a$ is joined to $1,3,5,\ldots,2k-1$ and $b$ is joined to $2,4,6,\ldots,2k$. In Figure~\ref{fig:665} we illustrate $G_5$. 





\def\tf#1{{\Scale[2.8]{#1}}}
\begin{figure}[ht!]
\centering
\def\tc#1{{\Scale[5.4]{#1}}}
\def\tf#1{{\Scale[2.4]{#1}}}
\def\tg#1{{\Scale[3]{#1}}}
\def\th#1{{\Scale[4]{#1}}}
\def\somea{{\Scale[3.5]{\text{\rm (a)}}}}
\def\someb{{\Scale[2.5]{\text{\rm (b)}}}}
\scalebox{0.22}{\input{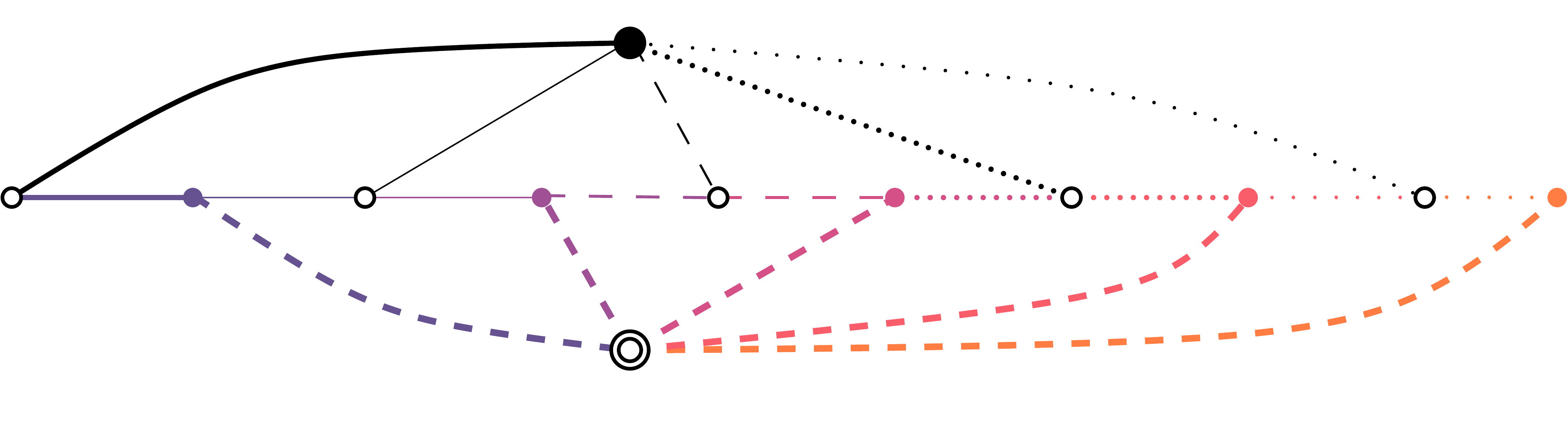_t}}
\caption{The graph $G_5$ is bipartite, with bipartition classes $A=\{a,2,4,6,8,10\}$ and $B=\{b,1,3,5,7,9\}$. Each edge is drawn with one of six colours, and is of one of six {\em types}: thin solid, thick solid, thin dashed, big dotted, small dotted, or thick dashed. For each vertex $v$ in $A$, the edges incident with $v$ are of the same colour, and for each $v$ in $B$ the edges incident with $v$ are of the same type. Therefore in order to verify that a drawing of this graph is a thrackle, one must check that two edges cross each other (exactly once) if and only if they have different colours and are of different types.
}
\label{fig:665}
\end{figure}

\begin{lemma}\label{lem:or1}
For each integer {$k \ge 1$} the graph $G_k$ can be thrackled on $S_1$.
\end{lemma}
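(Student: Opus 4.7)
The plan is to exhibit an explicit drawing of $G_k$ on $S_1$ and to verify that it satisfies the thrackle property. I would represent the torus as a rectangle (or hexagon) with opposite sides identified, in the spirit of the $K_{3,3}$ drawing in Figure~\ref{fig:130}. The underlying idea is to exploit the bipartite structure of $G_k$: the class $A=\{a,2,4,\ldots,2k\}$ together with the class $B=\{b,1,3,\ldots,2k-1\}$ partitions the vertices, and I intend to place the two ``hubs'' $a$ and $b$ on opposite sides of a horizontal curve through the fundamental region while arranging the path vertices $1,2,\ldots,2k$ along that curve.

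Concretely, I plan to place vertices $1,2,\ldots,2k$ in this order along a horizontal curve in the fundamental rectangle, with $a$ above the curve and $b$ below it. The star edges $(a,2j-1)$ and $(b,2j)$ would be drawn as nearly straight segments, with some of them routed through the identified vertical (or horizontal) sides of the rectangle so that each star edge from $a$ crosses each star edge from $b$ exactly once, as required since no star from $a$ shares a vertex with any star from $b$. The path edges $(i,i+1)$ would be drawn as arcs alternating above and below the path, with the winding chosen so that every pair of non-adjacent path edges crosses exactly once, and so that each path edge crosses the correct subset of star edges.

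For the verification I would rely on the colour/type convention described in the caption of Figure~\ref{fig:665}: each edge has a colour determined by its endpoint in $A$ and a type determined by its endpoint in $B$, so that the thrackle condition reduces to the statement that two edges cross exactly once if and only if they have different colours \emph{and} different types. This reduces the check to a small number of structurally distinct cases: (i) two stars sharing a hub never cross; (ii) a star from $a$ and a star from $b$ cross exactly once; (iii) two path edges that share an endpoint meet only there, while non-adjacent path edges cross exactly once; and (iv) a path edge and a star edge either share a vertex (in which case they meet only there) or are disjoint (in which case they must cross exactly once). The parity-based symmetry of the construction would collapse each case to a short, uniform check.

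The main obstacle is designing a routing that is both uniform in $k$ and easily verifiable. The difficulty lies in coordinating three things simultaneously: the windings of the path edges around the torus handle, the routings of the star edges through the identified boundary, and the relative positions of the path and star crossings near each path vertex. I would resolve this by first working out the pattern in detail for a small case such as $k=3$ or $k=5$ (matching Figure~\ref{fig:665}), then showing that the pattern extends in the obvious way to arbitrary $k\ge 1$, with the inductive step adding vertices $2k-1$ and $2k$ together with the four edges $(2k-2,2k-1)$, $(2k-1,2k)$, $(a,2k-1)$, $(b,2k)$ inside a small region adjoining vertex $2k-2$, crucially without introducing any new handle.
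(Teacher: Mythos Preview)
Your overall strategy---exhibit an explicit drawing for a representative value of $k$ and argue that the pattern extends uniformly---is exactly what the paper does: its entire proof is a reference to Figure~\ref{fig:670} (the case $k=5$) together with the assertion that the picture generalizes.  So at the level of method there is no difference.

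Where your proposal diverges is in the concrete layout, and here there is a genuine gap.  You propose to place $1,2,\ldots,2k$ \emph{in path order} along a horizontal line with $a$ above and $b$ below.  The paper's drawing does not do this: it clusters the odd vertices $1,3,5,\ldots,2k-1$ together, clusters the even vertices $2,4,6,\ldots,2k$ separately, and places $a$ and $b$ so that every path edge and every star edge is routed through the single handle in a uniform way.  With your interleaved layout the short local arcs $(i,i+1)$ would not cross one another, so each path edge would have to be sent around the handle individually; you acknowledge this as the ``main obstacle'' but do not actually resolve the simultaneous coordination of path--path, path--star, and star--star crossings.  The paper's odd/even separation is what makes this coordination tractable.

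Your inductive extension is also problematic as stated.  Inserting $2k-1$, $2k$ and the four new edges ``inside a small region adjoining vertex $2k-2$'' cannot by itself produce a thrackle: each of the four new edges must cross \emph{every} nonadjacent edge already in the drawing, including edges far from $2k-2$, so a purely local insertion is impossible.  Any valid inductive step would have to thread the new edges through the entire existing picture, which is essentially as hard as designing the global drawing from scratch.  The paper avoids this by giving a single uniform pattern for all $k$ rather than an inductive construction.
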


\begin{lemma}[{Cloning a star, orientable version}]\label{lem:or2}
{Let $G$ be a graph that can be thrackled on some surface $M$, and let $t$ be a positive integer. If $G$ has a vertex of degree at least $2t+1$, then it is possible to add one vertex and $2t+1$ edges to $G$ so that the resulting graph can be thrackled on $M\# S_{t}$.}
\end{lemma}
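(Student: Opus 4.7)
The plan is to mirror the construction of Lemma~\ref{lem:no2}, but with each crosscap $\XX_i$ replaced by a handle. The key topological observation is that a handle offers two independent ``tracks'' along which edges can be rerouted from one side of the rectangle $Q$ to another without crossing each other, whereas a crosscap offers only one. This accounts precisely for the two numerical changes in the hypothesis: the construction will accommodate $2t$ rerouted old edges and $2t+1$ new edges, and hence requires $s\ge 2t+1$ incident edges at the chosen vertex $u$.

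I would set up the same layout as in Figure~\ref{fig:715}, labelling the edges incident with $u$ as $e_1,\ldots,e_s$ in clockwise cyclic order, and assuming (after a self-homeomorphism of $M$) that inside $Q$ the $e_i$'s are pairwise disjoint arcs and all crossings involving any $e_i$ lie outside $Q$. I would then glue $t$ handles $\HH_1,\ldots,\HH_t$ inside $Q$, producing the orientable surface $M\# S_t$, and for each $j=1,\ldots,t$ reroute $e_{2j-1}$ and $e_{2j}$ through $\HH_j$, one along each of its two tracks so that they remain disjoint inside the handle. Next I would place the new vertex $v$ inside $Q$ and draw $2t+1$ new edges $f_1,\ldots,f_{2t},f_s$ from $v$ to vertices $1,\ldots,2t,s$. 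For $1\le i\le 2t$, the edge $f_i$ is routed through $\HH_{\lceil i/2\rceil}$ along the track opposite to the one used by $e_i$; it then travels around $u$ crossing $e_{i+1},\ldots,e_s,e_1,\ldots,e_{i-1}$ in this cyclic order, and finally hugs $e_i$ just before reaching vertex $i$ so as to pick up exactly one crossing with every edge of $G$ that crosses $e_i$. The last edge $f_s$ is drawn outside all the handles, exactly as in the nonorientable proof.

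The verification of the thrackle property would split into three checks: that the original pairwise crossings in $G$ are preserved (all alterations take place inside $Q$ and consist of rerouting short initial segments through handles, which create no new crossings); that each $f_i$ crosses exactly once every edge not adjacent to it; and that no two of the $f_i$'s cross each other away from $v$. The main obstacle, which I expect to require a finite case analysis, is specifying the cyclic order of $f_1,\ldots,f_{2t},f_s$ at $v$ together with a consistent choice of orientations for the handle identifications so that all three conditions hold simultaneously. I would attempt this by placing the two $f_i$'s sharing a handle consecutively in the rotation at $v$ and grouping the $t$ resulting pairs using a parity-based zig-zag pattern analogous to the one at the end of the proof of Lemma~\ref{lem:no2}; the fact that handles preserve orientation should in fact make the corresponding verification somewhat cleaner than in the nonorientable case.
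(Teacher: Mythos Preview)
Your overall strategy matches the paper's: isolate a rectangle $Q$ around $u$, attach $t$ handles $\HH_1,\ldots,\HH_t$ inside $Q$, reroute the first $2t$ edges $e_i$ through them, place a new vertex $v$ in $Q$, and draw $2t{+}1$ new edges $f_1,\ldots,f_{2t},f_s$ that follow the corresponding $e_i$ closely outside~$Q$. The divergence is at the routing step inside $Q$, and that is where a gap opens.

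You propose to send each $f_i$ (for $1\le i\le 2t$) through the \emph{single} handle $\HH_{\lceil i/2\rceil}$, using a ``track opposite to $e_i$''. The paper's construction is different and deliberately asymmetric: for odd $i$ the edge $f_i$ goes \emph{over} (through) the single handle $\HH_{(i+1)/2}$, sharing it with $e_i$ and $e_{i+1}$; but for even $i$ the edge $f_i$ is routed \emph{underneath} all of $\HH_1,\HH_2,\ldots,\HH_{i/2}$ and does not go through any handle. The feature of a handle being exploited is the over/under dichotomy (over = along the attached cylinder; under = on the original surface between the two attaching disks), not two parallel tracks within the cylinder. Correspondingly, the rotation at $v$ in the paper is $f_2,f_4,\ldots,f_{2t},f_{2t-1},\ldots,f_3,f_1,f_s$: the two $f$'s associated with a given handle are \emph{not} consecutive, contrary to what you suggest.

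The reason this matters: with the handles stacked in a column below $v$ (as in Figure~\ref{fig:740}), the edge $f_{2j}$ must reach level $j$ from $v$ without crossing any $e_i$ it should not cross and without crossing any other $f_k$. Sending it under $\HH_1,\ldots,\HH_j$ is precisely what makes this possible: it slips beneath the earlier handles (hence beneath the odd $f_{2j'-1}$'s that go over them) while staying disjoint from all the $e_i$'s, which are routed over. Under your scheme, where each $f_i$ enters only $\HH_{\lceil i/2\rceil}$, you have not explained how $f_4,f_6,\ldots$ travel from $v$ past the earlier handles to reach their own; a zig-zag order at $v$ alone does not resolve this. Either adopt the paper's over/under scheme, or supply an explicit alternative layout of the handles and of $v$ that makes the single-handle-per-$f_i$ routing verifiable.
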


Before we prove these lemmas, we show that Theorem~\ref{thm:mainor} is an easy consequence of them.

\begin{proof}[Proof of Theorem~\ref{thm:mainor}, assuming Lemmas~\ref{lem:or1} and~\ref{lem:or2}]
{For $g=1$ the theorem claims the existence of arbitrarily large connected graphs with $n$ vertices and $m=2n+2{\cdot}1 - 8=2n - 6$ edges that can be thrackled on $S_1$. For this case we note that Lemma~\ref{lem:or1} actually yields a slightly stronger statement, since $G_k$ has $2k+2$ vertices and $4k-1$ edges, and $4k-1=2(2k+2)-5$.}

{Suppose now that $g \ge 2$. Let $t:=g-1$, and let $k$ be any integer such that $k \ge 2t+1$. We know from Lemma~\ref{lem:or1} that the graph $G_k$ can be thrackled on $S_1$. Since $G_k$ has a vertex of degree $k\ge 2t+1$, and it has $2k+2$ vertices and $4k-1$ edges, it follows from Lemma~\ref{lem:or2} that there is a graph with $n:=(2k+2) + 1=2k+3$ vertices and $m:=(4k-1) + 2t + 1=4k+2g-2$ edges that can be thrackled on $S_1\# S_{t}=S_1\# S_{g-1} =S_g$. We complete the proof by noting that {$m=2n+2g -8$}, and that $n$ can be made arbitrarily large by making $k$ sufficiently large}.
\end{proof}

\begin{proof}[Proof of Lemma~\ref{lem:or1}]
In Figure~\ref{fig:670} we illustrate a thrackle of $G_5$ on the torus $S_1$. The torus is obtained by removing the gray open disks and identifying their boundary circles using the orientations shown. It is straightforward to generalize this drawing to yield a thrackle of $G_k$ on $S_1$ for any integer ${k\ge 1}$.
\end{proof}





\def\tf#1{{\Scale[2.8]{#1}}}
\begin{figure}[ht!]
\centering
\def\tc#1{{\Scale[5.4]{#1}}}
\def\tf#1{{\Scale[2.6]{#1}}}
\def\tg#1{{\Scale[3]{#1}}}
\def\th#1{{\Scale[4]{#1}}}
\def\somea{{\Scale[3.5]{\text{\rm (a)}}}}
\def\someb{{\Scale[2.5]{\text{\rm (b)}}}}
\scalebox{0.2}{\input{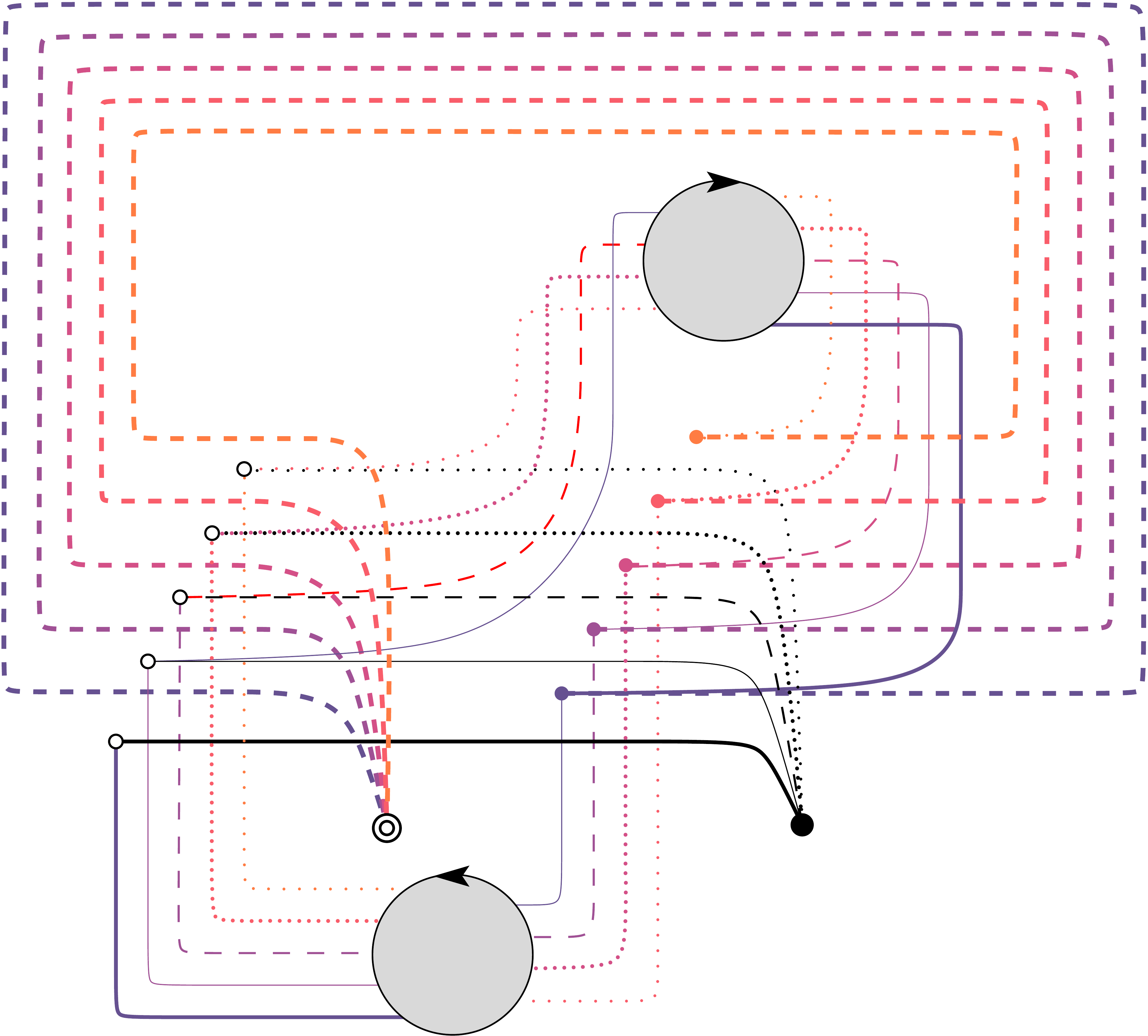_t}}
\caption{A thrackle of $G_5$ on $S_1$. Each edge has the same colour and type as in Figure~\ref{fig:665}. Therefore, as noted in the caption of that figure, in order to verify that this is indeed a thrackle of $G_5$ one must check that two edges cross each other (exactly once) if and only if they have different colours and are of different type.}
\label{fig:670}
\end{figure}

\begin{proof}[Proof of Lemma~\ref{lem:or2}]
{Throughout the proof $t$ is a positive integer and $G$ is a graph thrackled on some surface $M$, where $G$ has a vertex $u$ of degree $s \ge 2t+1$. Our discussion involves arbitrary values of $t$ and $s$, while we illustrate with figures the case in which $t=3$ and $s=9$}. 

{As in the proof of Lemma~\ref{lem:no2} (see Figure~\ref{fig:715}), we label by $1,2,\ldots,s$ the vertices adjacent to $u$, and we let $e_i$ denote the edge that joins $u$ to $i$, for $i=1,2,\ldots,s$. We choose the labelling so that the edges $e_1,e_2,\ldots,e_s$ leave $u$ in this cyclic order.}





{To prove the theorem we shall show that it is possible to add to $G$ one new vertex $v$ and $2t+1$ new edges $f_1,f_2,\ldots,f_{2t-1},f_{2t}, f_s$, so that the resulting graph can be thrackled on $M\# S_t$.}

We let $Q$ be a {rectangle} that contains $u$ and a (connected) part of $e_i$ for $i=1,2,\ldots,s$, and does not intersect any other vertex or edge in the drawing. As in the proof of Lemma~\ref{lem:no2}, without loss of generality we may assume that the layout is as depicted in Figure~\ref{fig:715}.

We now proceed as illustrated in Figure~\ref{fig:740}. We start by gluing $t$ handles {$\HH_1,\HH_2,\ldots,\HH_t$} inside the {rectangle} $Q$. We use these handles to redraw edges $e_1,e_2,\ldots,e_{2t}$, so that edges $e_i$ and $e_{i+1}$ go ``through'' handle $\HH_{(i+1)/2}$ for $i=1,3,5,\ldots,2t-1$. Note that the edges incident with $u$ now leave $u$ in the {clockwise} cyclic order $e_2,e_1,e_4,e_3,e_6,e_5,\ldots,e_{2t},e_{2t-1},e_{2t+1},e_{2t+2},e_{2t+3},\ldots,e_{s-1},e_s$.

\def\tf#1{{\Scale[2.8]{#1}}}
\begin{figure}[ht!]
\centering
\def\hone{{{\mathcal H}_1}}
\def\htwo{{{\mathcal H}_2}}
\def\hthree{{{\mathcal H}_3}}
\def\tc#1{{\Scale[5.5]{#1}}}
\def\tx#1{{\Scale[6]{#1}}}
\def\utf#1{{\Scale[8]{#1}}}
\def\tf#1{{\Scale[4.5]{#1}}}
\def\tg#1{{\Scale[3]{#1}}}
\def\tix#1{{\Scale[5.5]{#1}}}
\def\somea{{\Scale[2.5]{\text{\rm (a)}}}}
\def\someb{{\Scale[2.5]{\text{\rm (b)}}}}
\hglue -0.2cm\scalebox{0.125}{\input{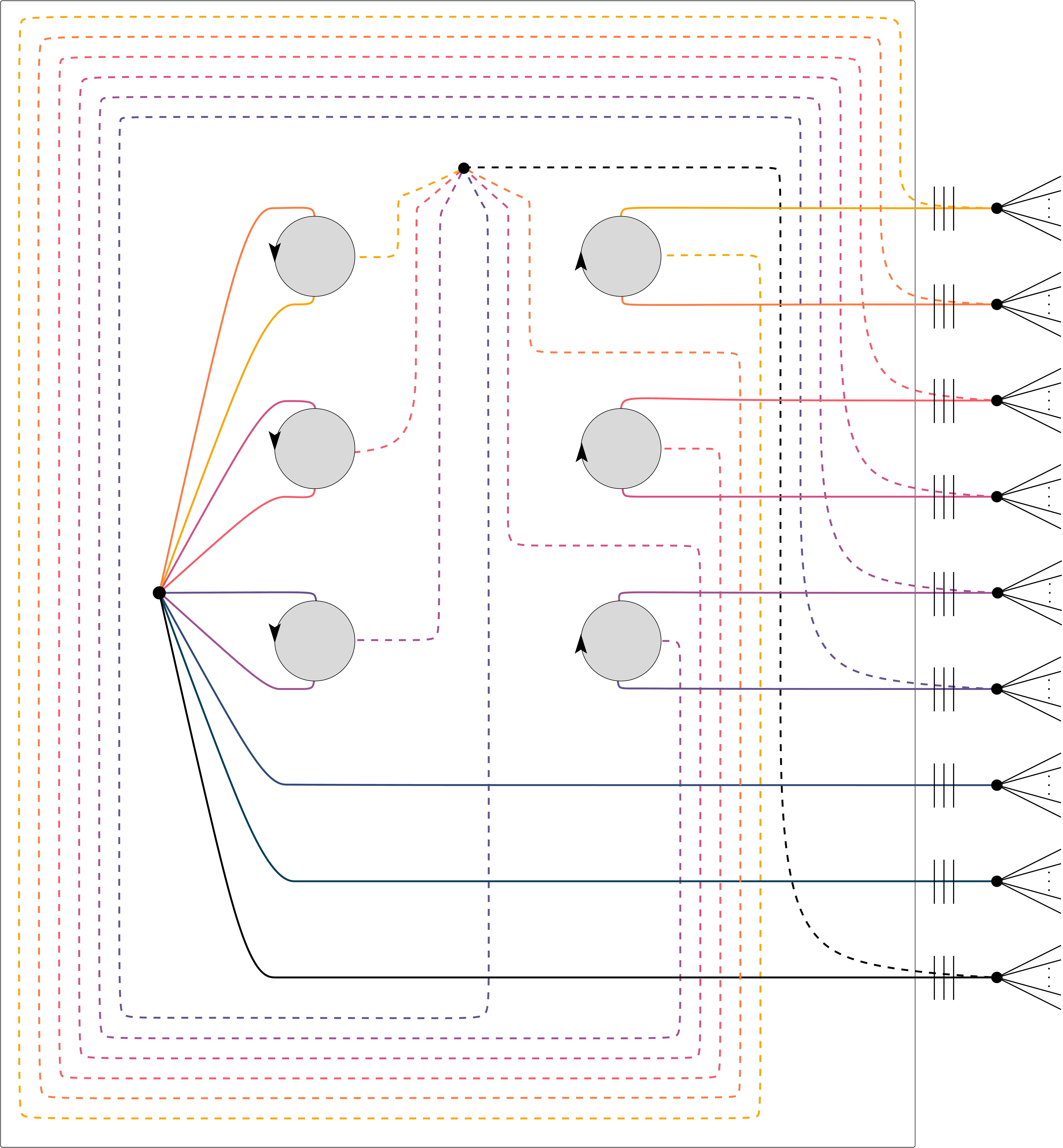_t}}
\caption{Cloning a star (orientable version).}
\label{fig:740}
\end{figure}

Next we place a new vertex $v$ also inside $Q$, and for $i=1,2,\ldots,2t$ and $s$ we join $v$ to vertex $i$ with an edge $f_i$. These $2t+1$ new edges are the dotted edges in Figure~\ref{fig:740}, and to help comprehension for $i=1,2,\ldots,t$ and $s$ the edge $f_i$ is drawn with the same colour as $e_i$. These edges leave $v$ in the {clockwise} cyclic order {$f_2, f_4, \ldots, f_{2t-4}, f_{2t-2}, f_{2t}, f_{2t-1}, \ldots, f_3, f_1,f_s$}.

As we also illustrate in {Figure~\ref{fig:740}}, for $i=1,3,5,\ldots,2t-1$ we draw $f_i$ so that after leaving $v$ it goes through handle $\HH_{(i+1)/2}$ (``sharing'' the handle with $e_i$ and $e_{i+1}$), then crosses the edges $e_{i+1},e_{i+2},\ldots,e_{s},e_1,e_2,\ldots,e_{i-1}$ in this order, and finally is routed very close to $e_{i}$, so that just before it reaches vertex $i$ it crosses (outside $Q$) all the edges that cross $e_i$. It is straightforward to check that by drawing $f_i$ in this way we ensure that it crosses exactly once each edge incident with neither $v$ nor $i$, in agreement with the thrackle property.

Now for $i=2,4,6,\ldots,2t$ we draw the edge $f_i$ as follows. After leaving $v$ the edge $f_i$ goes ``underneath'' handles $\HH_{1},\HH_2,\ldots,\HH_{i/2}$, and then (as in the case in which $i$ is odd) it crosses the edges $e_{i+1},e_{i+2},\ldots,e_{s},e_1,e_2,\ldots,e_{i-1}$ in this order, and finally is routed very close to $e_{i}$, so that just before it reaches vertex $i$ it crosses all the edges that cross $e_i$. As in the case in which $i$ is odd, it is straightforward to check that by drawing $f_i$ in this way we ensure that it crosses exactly once each edge incident with neither $v$ nor $i$, in agreement with the thrackle property.

Finally, as we also illustrate in Figure~\ref{fig:740}, we draw $f_s$ in an exceptional way, so that it crosses $e_1, e_2, \ldots, e_{s-1}$ in this order, and finally route it very close to $e_s$, so that before it reaches vertex $s$ it crosses all the edges that cross $e_s$. As in the previous cases, this guarantees that $f_s$ crosses exactly once each edge incident with neither $v$ nor $s$, in agreement with the thrackle property.

{The final result of this construction is a thrackle on $M\# S_t$ of a graph that is obtained from $G$ by adding one vertex and $2t+1$ edges, as required.}
\end{proof}

%

\section{Proof of Theorem~\ref{thm:upgendisc1} and more general upper bounds}\label{sec:upper}



We prove Theorem~\ref{thm:upgendisc1} as part of a more general discussion on general upper bounds on the number of edges of a graph that can be thrackled on an orientable or nonorientable surface of positive genus. In particular, Theorem~\ref{thm:upgendisc1} will follow as a particular case of Theorem~\ref{thm:upgendisc} below.


We note that the best available bounds, as well as the bounds we prove below, actually hold in the broader context of generalized thrackles. We recall that a {\em generalized thrackle} is a drawing of a graph on some surface such that (i) each pair of adjacent edges intersect each other (other than at their common endvertex) at an even number of points (possibly zero), at which they cross; and (ii) each pair of nonadjacent edges intersect each other in an odd number of points, at which they cross. Evidently, a thrackle is a particular instance of a generalized thrackle.

{Cairns and Nikolayevsky established the following general upper bound for bipartite graphs.}

\begin{theorem}[\hglue -0.01 cm{\cite[Theorem 4(a)]{cairnsbounds}}]\label{thm:upbip}
{Let $G$ be a bipartite graph with $n$ vertices and $m$ edges. Suppose that $G$ has $k$ connected components, each of which has at least three vertices, and let $g \ge 0$ be an integer. If there exists a generalized thrackle of $G$ on $S_g$, then $m \le 2n -4k + 4g$.}
\end{theorem}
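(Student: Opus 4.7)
The plan is to reduce the generalized-thrackle hypothesis to an embedding hypothesis, and then apply Euler's formula to each connected component. The two-step structure is the following.

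\textbf{Step 1: From generalized thrackle to embedding.} Since $G$ is bipartite, I would show that a generalized thrackle of $G$ on $S_g$ can be converted into a proper (crossing-free) embedding of $G$ on $S_g$. In the generalized thrackle, every pair of independent edges crosses an odd number of times, while every pair of adjacent edges crosses an even number of times. Exploiting the bipartition $V(G)=A\cup B$, I would perform a sequence of local parity-flipping modifications at the vertices of one class so as to toggle precisely the crossing parities of independent edge pairs, while preserving the (even) parity of adjacent pairs. The outcome is an independently \emph{even} drawing of $G$ on $S_g$. At that point, the Hanani--Tutte theorem for orientable surfaces of Pelsmajer, Schaefer, and \v{S}tefankovi\v{c} applies and yields an actual embedding of $G$ on $S_g$.

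\textbf{Step 2: Euler's formula via additivity of genus.} Once $G$ embeds on $S_g$, the Battle--Harary--Kodama--Youngs additivity theorem for orientable genus gives
\[
\sum_{i=1}^{k}\gamma(G_i)\le g,
\]
where $G_1,\ldots,G_k$ are the components of $G$ and $\gamma(\cdot)$ denotes the orientable genus. For each component $G_i$ with $n_i\ge 3$ vertices and $m_i$ edges, I would fix a $2$-cell embedding of $G_i$ on $S_{\gamma(G_i)}$. If $G_i$ contains a cycle, bipartiteness forces girth at least $4$, so every face has length at least $4$ and $2m_i\ge 4 f_i$; combined with Euler's formula $n_i - m_i + f_i = 2 - 2\gamma(G_i)$, this yields
\[
m_i \le 2 n_i - 4 + 4\gamma(G_i).
\]
If $G_i$ is a tree, the same bound holds trivially since $m_i = n_i - 1 \le 2n_i - 4$ whenever $n_i\ge 3$ and $\gamma(G_i)=0$. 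Summing over the components,
\[
m = \sum_{i=1}^{k} m_i \;\le\; \sum_{i=1}^{k}\bigl(2n_i - 4 + 4\gamma(G_i)\bigr) \;=\; 2n - 4k + 4\sum_{i=1}^{k}\gamma(G_i) \;\le\; 2n - 4k + 4g,
\]
as required.

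\textbf{The main obstacle} is Step~1, specifically the conversion of a generalized thrackle of a bipartite graph into an independently even drawing on the same surface $S_g$. The bipartite hypothesis must be used essentially: if it could be dropped, then every odd cycle (which admits a generalized thrackle on the sphere) would embed on the sphere, in contradiction with the existence of nonplanar generalized thrackles of odd cycles on higher-genus surfaces. So the parity flipping cannot be a purely topological maneuver; it must be driven by the $2$-coloring of $V(G)$. Making the argument work globally on $S_g$, rather than only in the plane (as in the original Lov\'asz--Pach--Szegedy argument), requires keeping track of how the local flips at vertices of one bipartition class interact with the handles of $S_g$ in homology, and this is the delicate technical point in the proof.
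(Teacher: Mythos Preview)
The paper does not prove this statement itself; Theorem~\ref{thm:upbip} is quoted from Cairns and Nikolayevsky without proof. What the paper does prove is the nearby Theorem~\ref{thm:upgendisc}, which drops bipartiteness at the cost of an additive $+2$, by passing to an $X$-parity (hence even) embedding on $M\#N_1$ via Theorem~\ref{theorem_X_parity} and then running Euler's formula componentwise. Your two-step plan is precisely the Cairns--Nikolayevsky argument that the paper is citing: their Theorem~3 (invoked later in the paper, in the proof of Proposition~\ref{pro:lowkmn}) says that a bipartite graph has a generalized thrackle on $S_g$ if and only if it embeds on $S_g$; that is your Step~1, and your Step~2 is the same Euler computation the paper carries out for Theorem~\ref{thm:upgendisc}. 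So the approach is correct and not genuinely different. One small variation: for the componentwise bound the paper just splits the surface and sums Euler genera (inequality~\eqref{eq_euler}), whereas you invoke Battle--Harary--Kodama--Youngs additivity of orientable genus; either works.

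One caution in Step~1. You describe the outcome of the vertex flips as an \emph{independently even} drawing and then cite a Hanani--Tutte theorem on surfaces. But by your own description the flips make independent pairs even while adjacent pairs stay even, so the resulting drawing is \emph{fully} even; the tool you actually need is the \emph{weak} Hanani--Tutte theorem on surfaces (Pelsmajer--Schaefer--\v{S}tefankovi\v{c}; also Cairns--Nikolayevsky), which does hold on every $S_g$. If instead you only obtained an independently even drawing and appealed to the \emph{strong} Hanani--Tutte theorem on $S_g$, that would be a real gap: the strong version is known to fail on orientable surfaces of genus at least $4$ (Fulek--Kyn\v{c}l). So when you flesh out the ``local parity-flipping'' step, make sure the construction genuinely makes \emph{all} pairs even, and state it that way.
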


The condition that $G$ has no connected components with fewer than three vertices was not stated in~\cite{cairnsbounds}, but this is an essential hypothesis. To see this, suppose that {$G$ is a nonnull graph with $s$ connected components that consist of a single vertex, $t$ connected components that have exactly two vertices (that is, they are isomorphic to $K_2$), and no connected components with three or more vertices}. Thus $n=s+2t$ and $m=t$. Since $G$ is bipartite and it can clearly be thrackled on $S_0$ (the sphere), the inequality in Theorem~\ref{thm:upbip} would imply that $m \le 2n - 4(s+t)$, that is, $t \le 2(s+2t) -4(s+t)$, and so $t \le -2s$. But this last inequality cannot hold, since $s$ and $t$ are both nonnegative integers and at least one of them is positive.

{In a subsequent paper, Cairns and Nikolayevsky proved the following general upper bound for arbitrary (that is, not necessarily bipartite) connected graphs.}

\begin{theorem}[\hglue -0.01 cm{\cite[Corollary 1]{cairnsgeneralized}}]\label{thm:upgen}
{Let $G$ be a connected graph with $n$ vertices and $m$ edges, and let $g \ge 0$ be an integer. If there exists a generalized thrackle of $G$ on $S_g$, then $m \le 2n - 2 + 4g$.}
\end{theorem}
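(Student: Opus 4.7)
The plan is to reduce to the bipartite case handled by Theorem~\ref{thm:upbip}. If $G$ is bipartite and connected, Theorem~\ref{thm:upbip} with $k = 1$ already gives $m \le 2n - 4 + 4g$, which is strictly stronger than the target, so we may assume $G$ is connected and non-bipartite.

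For non-bipartite $G$, my plan is to build from the given generalized thrackle of $G$ on $S_g$ a generalized thrackle of an auxiliary bipartite graph $\widetilde{G}$ on the \emph{same} surface $S_g$, with $|V(\widetilde{G})| \le n + 2$ and $|E(\widetilde{G})| \le m + 2$. Applying Theorem~\ref{thm:upbip} with $k=1$ to $\widetilde{G}$ would then yield $m + 2 \le 2(n+2) - 4 + 4g$, that is, $m \le 2n - 2 + 4g$; the ``$+2$'' in the target bound would precisely reflect the cost of two local modifications.

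A natural way to build $\widetilde{G}$ is by subdividing two carefully chosen edges $e_1, e_2 \in E(G)$ (each with a new vertex $w_i$), so that the characteristic vector of $\{e_1,e_2\}$ represents the parity functional on the cycle space of $G$ over $\mathbb{Z}_2$ (making every odd cycle even and keeping every even cycle even). The na\"ive subdivision does not preserve the generalized thrackle property: for an edge $f$ non-adjacent to $e_i$, the two halves of $e_i$ together cross $f$ an odd number of times (matching the parity of the original crossings of $f$ and $e_i$), whereas the generalized thrackle requirement forces \emph{both} halves to cross $f$ an odd number of times individually. One fixes this by rerouting each affected edge near $w_i$ through a small parity-correcting detour that flips the parity of its crossings with exactly one of the two halves; these detours can be placed in pairwise disjoint disks and therefore do not introduce any new handles or crosscaps.

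The main obstacle is that two edge subdivisions do not in general suffice to make a non-bipartite graph bipartite: for instance, $K_5$ requires at least three. Hence the argument must exploit the density constraint imposed by the existence of the generalized thrackle on $S_g$, or work with a more refined modification of $G$ than a plain edge subdivision. The Cairns--Nikolayevsky argument in~\cite{cairnsgeneralized} handles this delicate interplay by directly analyzing the parity structure of the generalized thrackle as a $\mathbb{Z}_2$-cohomology class on $S_g$ and encoding the ``non-bipartite defect'' of $G$ as a single topological constraint; the $+2$ in $m \le 2n - 2 + 4g$ then measures exactly the combinatorial cost of resolving this class via two local modifications to restore bipartiteness.
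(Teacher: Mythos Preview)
Your proposal has a genuine gap. You correctly reduce the bipartite case to Theorem~\ref{thm:upbip}, and for non-bipartite $G$ you propose to subdivide two edges $e_1,e_2$ so that $\widetilde G$ becomes bipartite and still admits a generalized thrackle on $S_g$. But, as you yourself point out, two edge subdivisions cannot in general kill all odd cycles (e.g.\ $K_5$), and your final paragraph does not repair this: it is a description of what the argument in~\cite{cairnsgeneralized} is supposed to accomplish, not an argument. In particular, there is no mechanism in your write-up that produces, from the generalized thrackle, a bipartite graph with only two more vertices and two more edges thrackled on the \emph{same} surface; nor is there an alternative that trades the defect for something else bounded by~$2$.

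The paper takes a different route that avoids this obstacle entirely. Instead of trying to make $G$ bipartite on $S_g$, it applies the Pelsmajer--Schaefer--\v{S}tefankovi\v{c} characterization (Theorem~\ref{theorem_X_parity}): $G$ has a generalized thrackle on $M$ if and only if $G$ has an $X$-parity embedding on $M\#N_1$. Any $X$-parity embedding is even, so (bipartite or not) $G$ has an even embedding on $M\#N_1$; restricting to the connected component gives a cellular even embedding on a surface of Euler genus at most $\varepsilon(M)+1$, where every facial boundary walk has length at least~$4$. Euler's formula then yields $m\le 2n-4+2(\varepsilon(M)+1)$, which for $M=S_g$ is $m\le 2n-4+2(2g+1)=2n-2+4g$. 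Thus the ``$+2$'' in the bound does not come from two edge subdivisions on $S_g$, but from the single additional crosscap needed to host the $X$-parity embedding.
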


Our aim in this final section is to note that, {using arguments by Cairns and Nikolayevsky~\cite{cairnsbounds} and a characterization of generalized thrackles on surfaces by Pelsmajer, Schaefer, and \v{S}tefankovi\v{c}~\cite{pelsmajereven}, Theorems~\ref{thm:upbip} and~\ref{thm:upgen} can be easily extended to generalized thrackles of arbitrary (connected or not) graphs on any surface, orientable or nonorientable}. This is the content of the next statement. We recall that the {\em Euler genus} $\varepsilon(M)$ of a surface $M$ of genus $g$ is $2g$ if $M$ is orientable, and $g$ if $M$ is nonorientable.

\begin{theorem}[Implies Theorem~\ref{thm:upgendisc1}]\label{thm:upgendisc}
{Let $G$ be a graph with $n$ vertices and $m$ edges. Suppose that $G$ has $k$ connected components, each of which has at least three vertices, and let $M$ be a surface (orientable or nonorientable). If there exists a generalized thrackle of $G$ on $M$, then $m \le 2n -4k + 2 \varepsilon(M) + 2$.}
\end{theorem}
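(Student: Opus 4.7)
The plan is to prove Theorem~\ref{thm:upgendisc} as a common generalization of Theorems~\ref{thm:upbip} and~\ref{thm:upgen}, combining two ingredients: (i) a result of Pelsmajer, Schaefer, and \v{S}tefankovi\v{c} that lets us replace a generalized thrackle on an arbitrary surface $M$ by one on an orientable surface of Euler genus at most $\varepsilon(M)$; and (ii) an argument of Cairns and Nikolayevsky that reduces arbitrary graphs to bipartite graphs at an additive cost of $+2$ in the bound.

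First, I would invoke the PSS theorem to reduce to the orientable setting: if $G$ admits a generalized thrackle on $M$, then $G$ admits a generalized thrackle on some $S_h$ with $2h \le \varepsilon(M)$, so $4h \le 2\varepsilon(M)$. It thus suffices to show that every generalized thrackle of $G$ on $S_h$ satisfies $m \le 2n - 4k + 4h + 2$. Next, on $S_h$, I would split according to whether $G$ is bipartite. If $G$ is bipartite, Theorem~\ref{thm:upbip} directly gives the stronger bound $m \le 2n - 4k + 4h$, and the claimed inequality follows immediately. If $G$ is not bipartite, I would adapt the Cairns--Nikolayevsky argument behind Theorem~\ref{thm:upgen} to construct a bipartite auxiliary graph $G'$ that generalized-thrackles on $S_h$, preserves the number $k$ of components of size at least three, and has vertex and edge counts differing from those of $G$ by amounts whose net contribution to the Euler-formula bound is exactly $+2$. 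Applying Theorem~\ref{thm:upbip} to $G'$ then yields $m \le 2n - 4k + 4h + 2$, as desired.

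The main obstacle is the bipartization step in the presence of multiple components. A naive subdivision of one edge per odd-cycle-containing component of $G$ would add up to $k$ new vertices and edges to $G'$, inflating the bound by a term proportional to $k$ rather than the required absolute constant $+2$. The resolution, which is the technical heart of the argument, is to observe (following Cairns and Nikolayevsky) that only a single global parity correction is needed: it reflects the $\mathbb{Z}/2$-homology class carried by the odd-cycle structure of $G$ on the surface, and can be encoded by a uniform modification whose cost in the bipartization is bounded by an absolute constant independent of $k$. Verifying that this global bipartization composes cleanly with the PSS orientable reduction --- in particular, that the auxiliary graph produced remains a generalized thrackle on $S_h$ with the correct component structure --- is then a routine bookkeeping exercise, and completes the proof.
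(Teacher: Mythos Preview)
Your step (i) is the real problem. The Pelsmajer--Schaefer--\v{S}tefankovi\v{c} theorem quoted in the paper (Theorem~\ref{theorem_X_parity}) does \emph{not} say that a generalized thrackle on $M$ yields one on some orientable $S_h$ with $2h \le \varepsilon(M)$. What it says is that $G$ has a generalized thrackle on $M$ if and only if $G$ has an $X$-parity embedding on $M\# N_1$; this \emph{adds a crosscap} rather than passing to an orientable surface. I am not aware of any result of the form you invoke, and without it your reduction to Theorems~\ref{thm:upbip} and~\ref{thm:upgen} does not get off the ground. Your step (ii) is also only sketched: the ``single global parity correction'' for multiple non-bipartite components is asserted, not proved, and it is precisely the point where a careless argument would lose a factor of $k$.

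The paper's proof avoids both difficulties by a route that is shorter and sidesteps bipartization altogether. From the $X$-parity embedding on $M\# N_1$ one observes that every $X$-parity embedding is \emph{even} (all facial walks have even length). One then restricts to each component $G_i$, obtaining cellular even embeddings on surfaces $M_i$ with $\sum_i \varepsilon(M_i)\le \varepsilon(M)+1$. Since the graphs are simple and each component has at least three vertices, every face has length $\ge 4$, so Euler's formula gives $m_i\le 2n_i-4+2\varepsilon(M_i)$ for each $i$; summing yields $m\le 2n-4k+2(\varepsilon(M)+1)$. The ``$+2$'' you were trying to produce via bipartization arises here simply as the cost of the extra crosscap in $M\# N_1$, uniformly and independently of $k$.
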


{For completeness, at the end of this paper (see Corollary~\ref{cor:upgendisc}) we establish the corresponding version of this theorem in which components with fewer than three vertices are allowed.}

{Recall that a graph embedding is~\emph{cellular} if every face is homeomorphic to an open disk. We say that an embedding (cellular or not) is {\em even} if every facial boundary walk has even length.}

{We use the following result by Pelsmajer, Schaefer, and \v{S}tefankovi\v{c}~\cite{pelsmajereven}. If $N$ is a nonorientable surface with a specified crosscap $X$, an {\em $X$-parity embedding} is an embedding of a graph on $N$ in which a cycle is odd if and only if it passes through $X$ an odd number of times. We make essential use of the observation that every $X$-parity embedding is even.} 

\begin{theorem}[\hglue -0.01 cm{\cite[Theorem 4.6]{pelsmajereven}}]
\label{theorem_X_parity}
A graph $G$ has a generalized thrackle on a surface $M$ if and only if $G$ has an $X$-parity embedding on $M\# N_1$.
\end{theorem}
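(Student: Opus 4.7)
The plan is to establish both directions of the equivalence using the operation of \emph{routing an edge through a crosscap} as the central tool. This operation, which replaces an edge by a detour that passes through the crosscap once, flips the $\mathbb{Z}_2$-parity of crossings between the rerouted edge and any other rerouted edge (since inside the crosscap the two detour-arcs can be arranged to cross exactly once). Combined with the weak Hanani--Tutte theorem on nonorientable surfaces (also due to Pelsmajer, Schaefer, and \v{S}tefankovi\v{c}), this provides enough flexibility to pass between generalized thrackles on $M$ and $X$-parity embeddings on $M\#N_1$.

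\emph{Forward direction.} Given a generalized thrackle $D$ of $G$ on $M$, I first attach a crosscap $X$ in the interior of a face of $D$ to obtain a drawing $D_0$ of $G$ on $M\#N_1$, and then reroute every edge of $G$ once through $X$ via a narrow detour that does not create crossings outside the crosscap neighborhood. The parity of crossings between every pair of edges is now flipped, so the resulting drawing $D_1$ has every pair of independent edges crossing evenly (and every pair of adjacent edges crossing oddly). The weak Hanani--Tutte theorem on $M\#N_1$ then converts $D_1$ into a genuine embedding $E$ of $G$ on $M\#N_1$. To see that $E$ is an $X$-parity embedding, note that in $D_1$ every edge crosses $X$ exactly once, so every cycle $C$ crosses $X$ with parity $|C|\bmod 2$. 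The moves used in the proof of the weak Hanani--Tutte theorem (edge homotopies rel endpoints and local edge-swaps at vertices whose difference is a null-homologous loop) preserve the $\mathbb{Z}_2$-homology class of every cycle, and hence also preserve its $X$-parity; therefore the same parity condition holds in $E$.

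\emph{Backward direction.} Given an $X$-parity embedding $E$ of $G$ on $M\#N_1$, let $\sigma\colon V(G)\to\mathbb{Z}_2$ be the vertex $2$-coloring furnished by the $X$-parity condition, so that an edge $e=uv$ crosses $X$ oddly iff $\sigma(u)=\sigma(v)$. Reroute every monochromatic edge through $X$ once, choosing all the new detour-arcs so that their endpoints on the boundary of the crosscap are arranged in a single pattern forcing any two detour-arcs to cross exactly once inside $X$. After this rerouting every edge has $X$-parity zero, and the only new crossings appear inside $X$, contributing one crossing per pair of monochromatic edges. The drawing can then be isotoped off the crosscap (every edge being $X$-even), yielding a drawing $D'$ of $G$ on $M$ in which pairs of monochromatic edges cross oddly and all other pairs cross evenly. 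Two kinds of local adjustments then upgrade $D'$ to a generalized thrackle: a local swap at the common vertex of each pair of adjacent monochromatic edges to eliminate its single crossing, and a small local detour introduced near a generic point of each pair of independent edges that currently do not cross to create a new crossing. These adjustments are carried out in pairwise disjoint regions, so they do not affect the crossings of any other pair. The resulting drawing on $M$ has adjacent pairs of edges crossing evenly and independent pairs crossing oddly, so it is a generalized thrackle.

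\emph{Main obstacle.} The principal difficulty lies in the backward direction, particularly in isotoping the drawing off the crosscap after every edge has become $X$-even: one must perform a simultaneous isotopy of all edges that preserves all pairwise crossing parities, which requires a careful argument showing that the $X$-even homotopy classes of all edges can be realized by paths disjoint from a regular neighborhood of the crosscap. A secondary obstacle is the verification in the forward direction that the Hanani--Tutte moves preserve the $\mathbb{Z}_2$-homology class of every cycle, which reduces to a case-by-case analysis of the individual moves used in the proof of the weak Hanani--Tutte theorem on nonorientable surfaces.
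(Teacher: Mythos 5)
First, a point of context: the paper does not prove this statement at all --- it is quoted verbatim from Pelsmajer, Schaefer, and \v{S}tefankovi\v{c}~\cite[Theorem 4.6]{pelsmajereven} and used as a black box --- so there is no internal proof to compare yours against; your attempt must stand on its own. It does not. In the forward direction, after you reroute every edge once through $X$ the independent pairs cross evenly but the adjacent pairs cross \emph{oddly}, and the weak Hanani--Tutte theorem requires \emph{every} pair of edges to cross evenly; the strong variant, whose hypothesis concerns only independent pairs, is not available on general surfaces (it is in fact false already on orientable surfaces of genus $4$) and would not give you control of the rotation system or of the $\mathbb{Z}_2$-homology classes you need afterwards. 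You are missing a step that evens out the adjacent pairs without disturbing the independent ones (for instance, a ``twist'' inside a small disk around each vertex, which adds one crossing to every pair of edges sharing that vertex and leaves all other pairs untouched); without it the reduction to weak Hanani--Tutte simply does not start.

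The backward direction is broken more fundamentally. Your intermediate drawing $D'$ on $M$ has monochromatic pairs crossing oddly and all other pairs crossing evenly, whereas a generalized thrackle needs adjacent pairs even and independent pairs odd; so you must flip every adjacent monochromatic pair and every independent bichromatic-involving pair, and neither of your proposed ``local adjustments'' can do this. The mod-$2$ crossing number of two arcs with four distinct endpoints is invariant under homotopy rel endpoints, so a ``small local detour near a generic point'' of one independent edge across another necessarily creates crossings in pairs and can never change their parity from even to odd --- this impossibility is exactly why Hanani--Tutte-type statements have content. For adjacent pairs the situation is only slightly better: redrawing inside a disk around the common vertex $v$ can only change the parities of the pairs at $v$ by a pattern of the form $\lambda_e+\lambda_f+c_v$ (winding an edge-end around $v$ affects its parity with \emph{all} edges at $v$ simultaneously, and a twist affects all pairs at $v$), and this linear family cannot realize the required multiplicative pattern ``flip exactly the monochromatic--monochromatic pairs'' already at a vertex incident with two monochromatic and two bichromatic edges. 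The correct proof must therefore build the right crossing parities into the crosscap surgery and the vertex gadgets from the outset, rather than patching them afterwards; arranging this is precisely the nontrivial content of the theorem you are trying to reprove.
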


\begin{proof}[Proof of Theorem~\ref{thm:upgendisc}]
Let $G$ be a graph with $n$ vertices and $m$ edges. Suppose that $G$ has $k$ connected components $G_1,\ldots,G_k$, each of which has at least three vertices. For $i=1,\ldots,k$ we let $n_i$ (respectively, $m_i$) denote the number of vertices (respectively, edges) in $G_i$.

Suppose that there exists a generalized thrackle of $G$ on a surface $M$. The ``only if'' part of Theorem~\ref{theorem_X_parity} then implies that $G$ has an $X$-parity embedding {on the surface obtained by adding a crosscap $X$ to $M$. Since every $X$-parity embedding is even, we conclude that $G$ has an even embedding on $M\# N_1$.} 

We now proceed as in the proof of~\cite[Theorem 4]{cairnsbounds}. Since $G$ has an even embedding on $M\# N_1$ it follows that for $i=1,\ldots,k$ there is a cellular even embedding $\Psi_i$ of $G_i$ on a surface $M_i$ where 
\begin{equation}
\sum_{i=1}^k \varepsilon(M_i) \le \varepsilon(M\# N_1)= \varepsilon(M) + 1.
\label{eq_euler}
\end{equation}

{Since none of $G_1,\ldots,G_k$ consists of a single isolated vertex (each connected component of $G$ has at least three vertices) it follows that each facial boundary walk of $\Psi_i$ has length at least $2$.}

Now a facial boundary walk in an embedding has length {exactly} $2$ only if the graph has parallel edges or has at least one component isomorphic to $K_2$. Since in our context all graphs under consideration are simple, and {no connected component of $G$ has fewer than three vertices}, we conclude that for $i=1,\ldots,k$, every facial boundary walk of $\Psi_i$ has length at least $4$.

Let $i\in \{1,\ldots,k\}$. If $f_i$ is the number of faces in $\Psi_i$, {since $\Psi_i$ is cellular it follows from} Euler's formula that $2-\varepsilon(M_i) = n_i - m_i + f_i$. Since each facial boundary walk of $\Psi_i$ has length at least $4$ it follows that $2m_i  \ge 4f_i$. Combining these two inequalities we obtain that $m_i \le 2n_i  -4 + 2\varepsilon(M_i)$. Therefore
\[ 
m = \sum_{i=1}^k m_i \le \sum_{i=1}^k \biggl(  2n_i  - 4 + 2\varepsilon(M_i)  \biggr) = 2n - 4k + 2\sum_{i=1}^k \varepsilon(M_i).
\] 

Using this inequality and~\eqref{eq_euler} we obtain $m \le 2n -4k + 2(\varepsilon(M)+1)$, that is, $m \le 2n -4k + 2\varepsilon(M) + 2$.
\end{proof}

{The discussion after Theorem~\ref{thm:upbip} implies that the condition in Theorem~\ref{thm:upgendisc} that no connected component has fewer than three vertices cannot be omitted. For completeness, we finally include the corresponding version of Theorem~\ref{thm:upgendisc} without restrictions on the sizes of the connected components of the graphs under consideration.}

\begin{corollary}\label{cor:upgendisc}
{Let $G$ be a graph with $n$ vertices and $m$ edges. Suppose that $G$ has $k$ connected components, $s$ of which have have exactly one vertex and $t$ of which have exactly two vertices. Further suppose that at least one component of $G$ has at least three vertices. If there exists a generalized thrackle of $G$ on a surface $M$, then $m \le 2n - 4k + 2 \varepsilon(M) + 2s +t + 2$.}
\end{corollary}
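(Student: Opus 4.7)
The plan is to reduce the corollary directly to Theorem~\ref{thm:upgendisc} by stripping off the ``trivial'' components of $G$. Specifically, let $G'$ be the subgraph of $G$ obtained by deleting every connected component that has at most two vertices. By assumption at least one component of $G$ has at least three vertices, so $G'$ is nonnull, and by construction every connected component of $G'$ has at least three vertices.

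The bookkeeping is straightforward. The $s$ isolated-vertex components of $G$ contribute no edges, while each of the $t$ two-vertex components contributes exactly one edge (since $G$ is simple, such a component is a copy of $K_2$). Hence $G'$ has $n' := n - s - 2t$ vertices, $m' := m - t$ edges, and $k' := k - s - t$ connected components. Moreover, the restriction of the given generalized thrackle of $G$ on $M$ to the edges and vertices of $G'$ is itself a generalized thrackle of $G'$ on $M$, since both conditions (i) and (ii) in the definition of a generalized thrackle only involve pairs of edges, and these conditions are preserved when one passes to a subgraph.

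Since $G'$ satisfies the hypotheses of Theorem~\ref{thm:upgendisc}, we may apply that result to obtain
\[
m - t \;\le\; 2(n - s - 2t) - 4(k - s - t) + 2\varepsilon(M) + 2.
\]
Expanding the right-hand side gives $2n - 2s - 4t - 4k + 4s + 4t + 2\varepsilon(M) + 2 = 2n - 4k + 2s + 2\varepsilon(M) + 2$, and adding $t$ to both sides yields the desired inequality
\[
m \;\le\; 2n - 4k + 2\varepsilon(M) + 2s + t + 2.
\]

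There is no real obstacle in this proof; the only point meriting a brief mention is why the restriction of a generalized thrackle to a subgraph is still a generalized thrackle, which is immediate from the edge-pairwise nature of the defining conditions. Everything else is routine arithmetic.
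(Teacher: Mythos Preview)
Your proof is correct and follows essentially the same approach as the paper's. Both proofs remove the components with at most two vertices to form a subgraph $G'$, apply Theorem~\ref{thm:upgendisc} to $G'$ (using that the restriction of a generalized thrackle to a subgraph is again a generalized thrackle), and then recover the inequality for $G$ via the same bookkeeping $n'=n-s-2t$, $m'=m-t$, $k'=k-s-t$.
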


We remark that the condition in Corollary~\ref{cor:upgendisc} that there is at least one component with at least three vertices leaves out only the case in which each component has at most two vertices, but this case is completely understood, since such a graph is thrackleable on the sphere $S_0$, and hence on any surface. 

\begin{proof}[Proof of Corollary~\ref{cor:upgendisc}]
Let $G_1,\ldots,G_k$ be the connected components of $G$, labelled so that each of $G_1,\ldots,G_{k-(s+t)}$ has at least three vertices, each of $G_{k-(s+t)+1},\ldots, G_{k-t}$ has exactly one vertex, and each of $G_{k-t+1},\ldots,G_k$ has exactly two vertices. For $i=1,\ldots,k$ we let $n_i$ (respectively, $m_i$) denote the number of vertices (respectively, edges) in $G_i$. 

Note that for $i=k-(s+t)+1,\ldots,k-t$ we have $n_i=1$ and $m_i=0$, and for $i=k-t+1,\ldots,k$ we have $n_i=2$ and $m_i=1$. Therefore
\begin{equation}\label{eq:edges}
m=\sum_{i=1}^k m_i = \sum_{i=1}^{k-(s+t)} m_i + \sum_{i=k-(s+t)+1}^{k-t} m_i + \sum_{i=k-t+1}^{k} m_i = \sum_{i=1}^{k-(s+t)} m_i + t.
\end{equation}
Similarly, 
\[
n=\sum_{i=1}^k n_i = \sum_{i=1}^{k-(s+t)} n_i + \sum_{i=k-(s+t)+1}^{k-t} n_i + \sum_{i=k-t+1}^{k} n_i = \sum_{i=1}^{k-(s+t)} n_i + s + 2t,
\] 
and so
\begin{equation}\label{eq:vertices}
\sum_{i=1}^{k-(s+t)} n_i = n - s - 2t.
\end{equation}
Now the graph $G':=G_1\cup \cdots \cup G_{k-(s+t)}$ has $\sum_{i=1}^{k-(s+t)} n_i$ vertices, $\sum_{i=1}^{k-(s+t)} m_i$ edges, and $k-(s+t)$ connected components. Since each connected component of $G'$ has at least three vertices, Theorem~\ref{thm:upgendisc} implies that
\begin{equation}\label{eq:disjun}
\sum_{i=1}^{k-(s+t)} m_i \le 2\biggl(\,\,\sum_{i=1}^{k-(s+t)} n_i \biggr) - 4(k-(s+t)) + 2\varepsilon(M) + 2. 
\end{equation}

Combining~\eqref{eq:edges} and~\eqref{eq:disjun} we obtain
\begin{equation*}\label{eq:ineq1}
m \le \biggl[ \biggl(\,\,2\sum_{i=1}^{k-(s+t)} n_i\biggr) - 4(k-(s+t)) + 2\varepsilon(M) + 2\biggr] + t.
\end{equation*}

\noindent In view of~\eqref{eq:vertices}, this last inequality implies that
\begin{align*}
m  &\le 2( n- s - 2t) - 4k + 4s + 4t + 2 \varepsilon(M) + 2 + t\\
& = 2n -4k + 2\varepsilon(M) + 2s + t + 2.\qedhere\\ 
\end{align*}\end{proof}

\section{{On the thrackle genus of $K_{m,n}$ and $K_n$}}\label{sec:final}

{Our aim in this section is to note that our constructions can be applied to obtain upper  bounds on the thrackle genus of complete and complete bipartite graphs. We recall that the {\em thrackle genus} $\Tg(G)$ of a graph $G$ is the smallest $g$ such that $G$ can be thrackled on $S_g$. This notion was introduced by Green and Ringeisen~\cite{greenringeisen}, who also proved that any given graph can be thrackled on some $S_g$.}

\subsection{Previous bounds on the thrackle genus}

{Given a graph $G$ let $\indep(G)$ denote the collection of all pairs of independent edges in $G$. Using this notation, a simple drawing of $G$ on some surface is a thrackle if and only if it has $|\indep(G)|$ crossings. Green and Ringeisen~\cite[Theorem 2]{greenringeisen} proved that for any integer $k$, $0\le k \le |\indep(G)|$, there is a simple drawing of $G$ on some $S_g$ with exactly $k$ crossings. As noted in~\cite[Corollary 3]{greenringeisen}, this implies in particular that every graph is thrackleable on some $S_g$.}

{Green and Ringeisen gave no explicit upper bounds on $\Tg(G)$ in that paper, but Cottingham and Ringeisen derived upper bounds in a subsequent paper~\cite{cottingham}. In a nutshell, Cottingham and Ringeisen noted that if $D$ is a simple drawing of a graph $G$ on some surface $S_h$, and $e,f$ are independent edges of $G$ that do not cross in $D$, then by adding at most two handles to $S_h$ it is possible to modify $D$ to obtain a drawing with the same crossings as $D$ plus a crossing between $e$ and $f$. An iterative application of this observation yields that if $G$ has a simple drawing on $S_h$ with $c$ crossings then $\Tg(G) \le h + 2 (|\indep(G)|-c)$~\cite[Theorem 5.1]{greenringeisen}. 
}

{
Using that $K_{m,n}$ has a simple drawing on $S_0$ with $\binom{m}{2} \binom{n}{2}$ crossings (place the vertices on the equator, with the vertices in each partite class appearing consecutively, and draw each edge on the Northern hemisphere), and that $\indep(K_{m,n}) = 2\binom{m}{2}\binom{n}{2}$, the inequality implies that $\Tg(K_{m,n}) \le 2\bigl( \binom{m}{2}\binom{n}{2} \bigr) = m^2n^2/2 + O(m^2n + mn^2)$. Similarly, using that $K_n$ has a simple drawing on $S_0$ with $\binom{n}{4}$ crossings (place the vertices on the equator, and draw each edge on the Northern hemisphere), and that $\indep(K_n) = 3\binom{n}{4}$, this inequality implies that $\Tg(K_n) \le 2(2\binom{n}{4}) = n^4/6 + O(n^3)$.
}

{
As it happens, the arguments used by Green and Ringeisen in the proof of~\cite[Theorem 2]{greenringeisen} can be used to derive slightly better bounds. In their proof they note that one can consider a drawing as an embedding, regarding crossings as degree four vertices. If the drawing of a graph $G=(V,E)$ under consideration is a thrackle, when regarded as an embedding it has $|V| + |\indep(G)|$ vertices, $|E| + 2|\indep(G)|$ edges, and (obviously) at least one face. Euler's formula then implies that the drawing can be hosted in $S_g$, where $2-2g \ge |V| + |\indep(G)| - (|E| + 2|\indep(G)|) + 1$, that is, where $g \le (1/2)(|E| - |V| + |\indep(G)| + 1)$. Therefore $\Tg(G) \le (1/2)(|E|-|V|+|\indep(G)|+1)$. 
}



{
Since $K_{m,n}$ has $m+n$ vertices, $mn$ edges, and $|\indep(K_{m,n})|= 2\binom{m}{2}\binom{n}{2}$, this last inequality implies that $\Tg(K_{m,n}) \le m^2 n^2/4 + O(m^2 n + m n^2)$, and since $K_n$ has $n$ vertices, $\binom{n}{2}$ edges and $|\indep(K_n)|=3\binom{n}{4}$, it follows that $\Tg(K_n) \le n^4/16 + O(n^3)$. Thus indeed this last argument yields better upper bounds than~\cite[Theorem 5.1]{greenringeisen}.
}

{
In any case these upper bounds are far from optimal. Indeed, as we show below, $\Tg(K_{m,n}) =\Theta(mn)$ and and $\Tg(K_n) = \Theta(n^2)$. 
}

\subsection{{Upper bounds on $\Tg(K_{m,n})$ and $\Tg(K_n)$}}
Let us start by noting that the proof of Lemma~\ref{lem:or2} in particular shows the following. 

\begin{lemma}[Follows from the proof of Lemma~\ref{lem:or2}]\label{lem:or2p}
{Let $G$ be a graph that can be thrackled on some surface $M$. {Let $u$ be a vertex of $G$, and let $d$ be the degree of $u$.} If $G'$ is the graph obtained by adding a vertex $v$ and joining it to all the vertices adjacent to $u$, then $G'$ can be thrackled on {$M\# S_{\ceil{{(d-1)}/{2}}}$.}} 
\end{lemma}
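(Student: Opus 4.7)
The plan is to observe that the construction in the proof of Lemma~\ref{lem:or2} actually delivers slightly more than its statement claims. Writing $d$ for the degree of $u$ and $t_0 := \ceil{(d-1)/2}$, I would run that construction with $t = t_0$ handles and route an edge from the new vertex $v$ to each of the $d$ neighbors of $u$, while preserving the thrackle property. The underlying counting is simple: with $t$ handles the construction adds $2t+1$ edges from $v$, so demanding $2t+1 \ge d$ gives exactly $t \ge t_0$.

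First I would handle the odd case, where $d = 2t_0 + 1$. This is precisely the setting of Lemma~\ref{lem:or2} with $s = d$, so the proof of that lemma already yields the desired thrackle on $M \# S_{t_0}$: the $t_0$ handles $\HH_1, \ldots, \HH_{t_0}$ accommodate the $2t_0$ edges $f_1, \ldots, f_{2t_0}$ (odd-indexed ones passing through a handle shared with two $e_i$'s, even-indexed ones passing underneath), and the exceptional edge $f_s = f_d$ is drawn without handles. Together these supply an edge from $v$ to each of the vertices $1, 2, \ldots, 2t_0, d$, i.e., to every neighbor of $u$.

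Next I would treat the even case, where $d = 2t_0$ and $s = d$. Here the exceptional edge $f_s = f_d$ of the original construction would coincide with $f_{2t_0} = f_d$, so I would simply omit it and use the $d$ edges $f_1, \ldots, f_{2t_0}$ alone. The crossing analysis carried out in the proof of Lemma~\ref{lem:or2} already shows that each $f_i$ crosses exactly once every edge of $G$ not incident to $v$ or vertex $i$, and that the $f_i$'s cross each other correctly; none of that reasoning invokes $f_s$. Hence dropping $f_s$ still produces a valid thrackle of $G'$ on $M \# S_{t_0}$.

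Finally I would dispose of the small cases. If $d = 0$ then $t_0 = 0$ and $v$ is added as an isolated vertex on $M$; the result is trivially a thrackle. If $d = 1$ then again $t_0 = 0$: I would place $v$ next to $u$ inside a small rectangle and route the single edge $f_1$ very close to $e_1$ toward vertex $1$, so that $f_1$ picks up exactly the crossings of $e_1$ with edges non-adjacent to it. This is the handle-free form of the exceptional edge in Lemma~\ref{lem:or2}. There is no genuine obstacle in the argument; the only point requiring care is the parity bookkeeping, namely verifying that $\ceil{(d-1)/2}$ uniformly captures the number of handles consumed across both parities of $d$.
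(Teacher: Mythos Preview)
Your proposal is correct and is exactly the approach the paper intends: the paper gives no separate proof of this lemma, stating only that it follows from the proof of Lemma~\ref{lem:or2}, and your argument spells out precisely how. Your parity split (taking $s=d=2t_0+1$ verbatim when $d$ is odd, and dropping the exceptional edge $f_s$ when $d$ is even so that $f_1,\ldots,f_{2t_0}$ already reach all $d$ neighbors) together with the disposal of the degenerate cases $d\le 1$ is the intended reading.
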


{The lemma easily yields bounds on the thrackle genus of $K_{m,n}$, as follows.}

\begin{corollary}\label{cor:k2n}
{For any integer $n\ge 2$ we have}
\[
\Tg(K_{2,n}) \le \ceil{(n-1)/2}.
\]
\end{corollary}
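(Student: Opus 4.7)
The plan is to bootstrap from the trivial thrackle of a star by invoking Lemma~\ref{lem:or2p}. The key observation is that the graph one obtains by taking a star $K_{1,n}$ and adding a new vertex joined to every leaf is precisely $K_{2,n}$, so Lemma~\ref{lem:or2p} is tailor-made for this application.

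First I would note that the star $K_{1,n}$ admits a thrackle on the sphere $S_0$: in a star every pair of edges shares the central vertex, so condition (ii) in the definition of a thrackle is vacuous, while condition (i) is satisfied by any straight-line drawing. Thus $K_{1,n}$ can be thrackled on $S_0$.

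Next, let $u$ denote the central vertex of $K_{1,n}$, whose degree is $d=n$. Invoke Lemma~\ref{lem:or2p} with $M=S_0$: adding a new vertex $v$ and joining it to every neighbor of $u$ yields a graph that can be thrackled on $S_0\# S_{\ceil{(n-1)/2}}= S_{\ceil{(n-1)/2}}$. But the resulting graph is exactly $K_{2,n}$, with bipartition classes $\{u,v\}$ and the $n$ original leaves of the star. This shows $K_{2,n}$ can be thrackled on $S_{\ceil{(n-1)/2}}$, so $\Tg(K_{2,n})\le \ceil{(n-1)/2}$, as claimed.

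No real obstacle arises: the entire argument is a one-step application of Lemma~\ref{lem:or2p} to the best-possible base graph. The only point worth double-checking is that the statement of Lemma~\ref{lem:or2p} does indeed apply with $d=n$ (allowing $n$ to be either parity), which is why the stronger formulation encoded in the ceiling $\ceil{(d-1)/2}$ is used rather than just the odd-degree case $(d-1)/2$ of Lemma~\ref{lem:or2} itself.
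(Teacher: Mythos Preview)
Your proof is correct and is essentially identical to the paper's own argument: the paper simply observes that $K_{1,n}$ is trivially thrackled on $S_0$ and then applies Lemma~\ref{lem:or2p} once to obtain $K_{2,n}$ on $S_{\ceil{(n-1)/2}}$. Your write-up merely spells out the same reasoning in more detail.
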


\begin{proof}
{Since $K_{1,n}$ is trivially thrackled in the sphere $S_0$ {for any positive integer $n$, Lemma~\ref{lem:or2p} implies} that $K_{2,n}$ can be thrackled in $S_{\ceil{(n-1)/2}}$.}
\end{proof}

\begin{corollary}\label{cor:kMn}
{For any integers $m\ge 3$ and $n\ge 3$ we have}
\[
{\Tg(K_{m,n})}\le 
\begin{cases}
{\dfrac{(m-1)(n-1)}{2}} - 1, &\mbox{{if at least one of $m$ and $n$ is odd}} \\[0.5cm]
{\dfrac{(m-1)(n-1)-1}{2}}, &\mbox{{if both $m$ and $n$ are even.}}
\end{cases}
\]
\end{corollary}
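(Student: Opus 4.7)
The plan is to derive both upper bounds by starting from the explicit thrackle of $K_{3,3}$ on $S_1$ shown in Figure~\ref{fig:130} and then iterating the cloning operation of Lemma~\ref{lem:or2p}. The decisive observation is that when $G=K_{a,b}$ is complete bipartite and $u$ is any vertex of $G$, the graph $G'$ obtained by cloning $u$ is again complete bipartite: if $u$ lies in the bipartition class of size $a$, then $\deg(u)=b$, the new vertex $v$ is adjacent to all of $N(u)$ and hence joins the class of $u$, so $G'=K_{a+1,b}$. By Lemma~\ref{lem:or2p}, this step raises the genus by exactly $\lceil (b-1)/2\rceil$.

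When at least one of $m,n$ is odd, the plan is to treat $m$ odd (the case $n$ odd following by the symmetry $K_{m,n}=K_{n,m}$) and to grow $K_{3,3}$ into $K_{m,n}$ in two stages. In the first stage I would clone $m-3$ times on one bipartition class, passing through $K_{4,3},K_{5,3},\ldots,K_{m,3}$; throughout this stage the opposite class has size $3$, so each clone costs $\lceil 2/2\rceil=1$. In the second stage I would clone $n-3$ times on the other class, passing through $K_{m,4},\ldots,K_{m,n}$; each clone is now of a vertex of degree $m$, and since $m$ is odd the cost is $(m-1)/2$. Adding the base genus $1$ and simplifying gives the bound $1+(m-3)+(n-3)(m-1)/2=(m-1)(n-1)/2-1$.

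When $m$ and $n$ are both even, the plan is to bootstrap from the previous case applied to $K_{m-1,n}$: since $m-1\ge 3$ is odd, the bound just established yields a thrackle of $K_{m-1,n}$ on a surface of genus at most $(m-2)(n-1)/2-1$. A single additional cloning in the class of size $m-1$, targeting a vertex of degree $n$, then produces $K_{m,n}$ and by Lemma~\ref{lem:or2p} raises the genus by $\lceil (n-1)/2\rceil=n/2$ (using that $n$ is even). Summing gives $(m-2)(n-1)/2-1+n/2=(mn-m-n)/2=((m-1)(n-1)-1)/2$, as required.

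The main subtlety is the \emph{order} in which the two stages are performed in the odd case: doing the cheap clones first (while the opposite class has size $3$) and the expensive ones last (across the odd-degree class, where the ceiling in Lemma~\ref{lem:or2p} is not wasteful) is essential. The reversed order, or growing the even side in the second stage, produces a bound that is weaker by a linear term and fails to match the one claimed. Beyond this bookkeeping, the argument is straightforward: Figure~\ref{fig:130} supplies the base case, Lemma~\ref{lem:or2p} supplies the inductive step, and no additional construction is needed.
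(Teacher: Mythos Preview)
Your proposal is correct and follows essentially the same approach as the paper: start from the thrackle of $K_{3,3}$ on $S_1$, grow one side while the other has size $3$ (cost $1$ per step), then grow the other side across the odd class (cost $(\text{odd}-1)/2$ per step), and handle the even--even case by bootstrapping from $K_{m-1,n}$ plus one more clone. The only cosmetic difference is that you take $m$ odd without loss of generality whereas the paper takes $n$ odd, which is immaterial by symmetry.
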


\begin{proof}
{Suppose that at least one of $m$ and $n$ (say $n$, without loss of generality) is odd.} Since $K_{3,3}$ can be thrackled on $S_1$ (see Figure~\ref{fig:130}), applying Lemma~\ref{lem:or2p} iteratively $n-3$ times {we obtain a thrackle drawing of $K_{3,n}$ on $S_{1 + (n-3)(3-1)/2}=S_{n-2}$}. Since $n$ is odd we can now apply the lemma iteratively $m-3$ times, obtaining that $K_{m,n}$ can be thrackled on $S_{n-2+(m-3)(n-1)/2}=S_{(m-1)(n-1)/2{-1}}$. Thus the corollary holds if at least one of $m$ and $n$ is odd.

{Suppose finally that both $m$ and $n$ are even.} {By assumption $m\ge 3$, and so $m\ge 4$. Since $m-1$ is at least $3$ and it is odd, then the corollary holds for $K_{m-1,n}$. Thus $K_{m-1,n}$ can be thrackled on $M=S_{(m-2)(n-1)/2 -1}$. An application of Lemma~\ref{lem:or2p} then implies that $K_{m,n}$ can be thrackled on $S_{((m-2)(n-1)/2 -1) + n/2} = S_{((m-1)(n-1)-1)/2}$.}
\end{proof}

Regarding complete graphs we start by noting that $K_1, K_2$, and $K_3$ are trivially thrackleable on $S_0$. {A straightforward case analysis shows that} $C_4$ cannot be thrackled on $S_0$, and consequently $K_4$ cannot be thrackled on $S_0$. {On the other hand, $K_4$ can be thrackled on $S_1$, as shown by Cottingham and Ringeisen~\cite[Figure 3]{cottingham}. In Figure~\ref{fig:165} we illustrate an alternative, more symmetrical representation of this thrackle.}

\def\tf#1{{\Scale[2.8]{#1}}}
\begin{figure}[ht!]
\centering
\def\tf#1{{\Scale[2.2]{#1}}}
\def\tg#1{{\Scale[2.5]{#1}}}
\def\somea{{\Scale[2.5]{\text{\rm (a)}}}}
\def\someb{{\Scale[2.5]{\text{\rm (b)}}}}
\scalebox{0.28}{\input{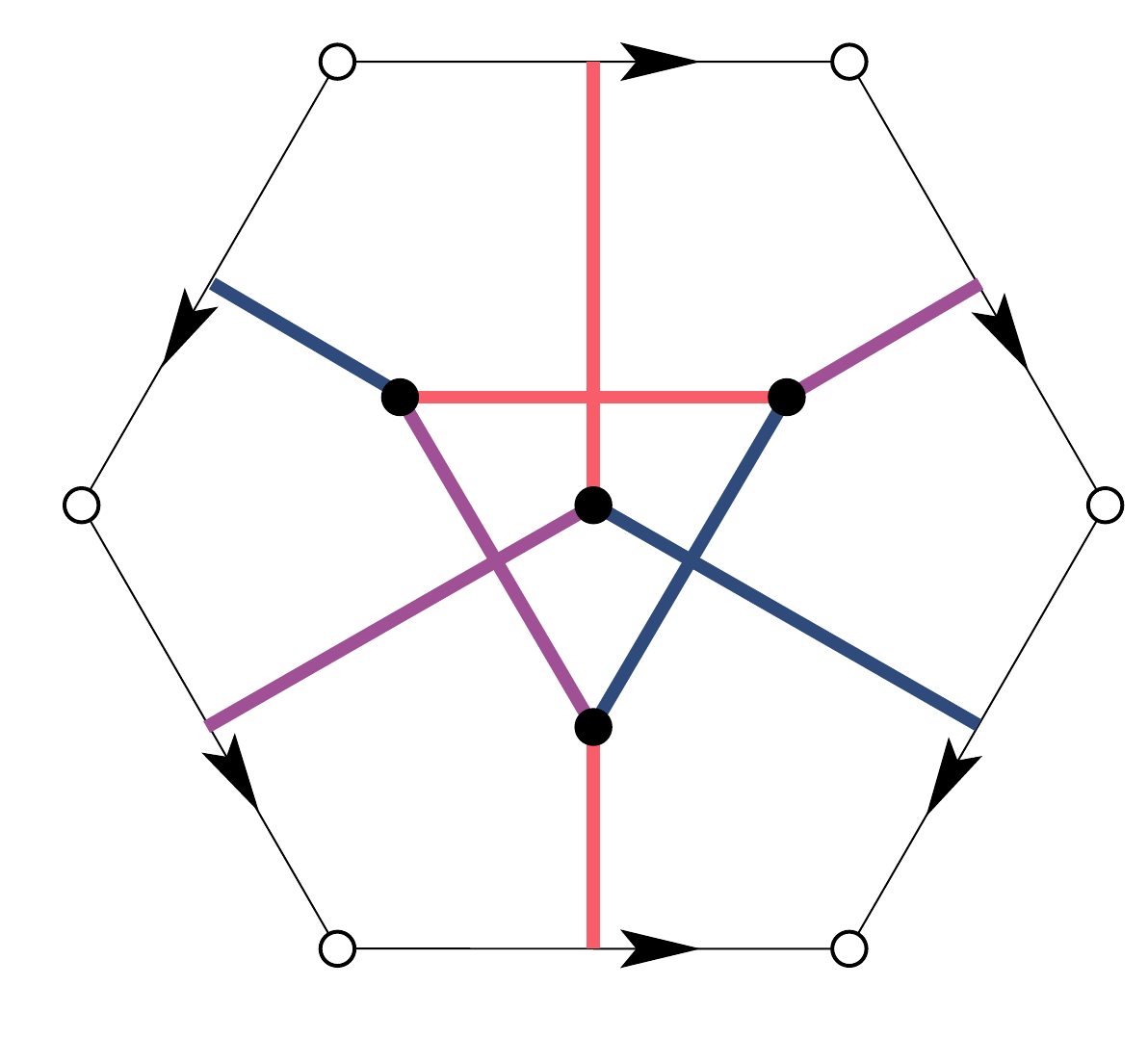_t}}
\caption{A thrackle of $K_{4}$ on the torus, represented as a hexagon with its three pairs of opposite sides identified.}
\label{fig:165}
\end{figure}

{To obtain bounds on $\Tg(K_n)$ for $n > 4$} we note that the proof of Lemma~\ref{lem:or2} can be adapted so that in the statement of Lemma~\ref{lem:or2p} we can also include the edge $uv$, at the cost of two additional handles. Indeed, we can follow the proof of Lemma~\ref{lem:or2} and at the end of the construction we just add the edge $uv$ and two new handles, as illustrated in Figure~\ref{fig:790} (where $uv$ is the  thick black edge). Thus we have the following.

\def\tf#1{{\Scale[2.8]{#1}}}
\begin{figure}[ht!]
\centering
\def\hone{{{\mathcal H}_1}}
\def\htwo{{{\mathcal H}_2}}
\def\hthree{{{\mathcal H}_3}}
\def\hfour{{{\mathcal H}_4}}
\def\hfive{{{\mathcal H}_5}}
\def\tc#1{{\Scale[5.5]{#1}}}
\def\tx#1{{\Scale[6]{#1}}}
\def\utf#1{{\Scale[8]{#1}}}
\def\tf#1{{\Scale[4.5]{#1}}}
\def\tg#1{{\Scale[3]{#1}}}
\def\tix#1{{\Scale[5.5]{#1}}}
\def\somea{{\Scale[2.5]{\text{\rm (a)}}}}
\def\someb{{\Scale[2.5]{\text{\rm (b)}}}}
\hglue -0.2cm\scalebox{0.125}{\input{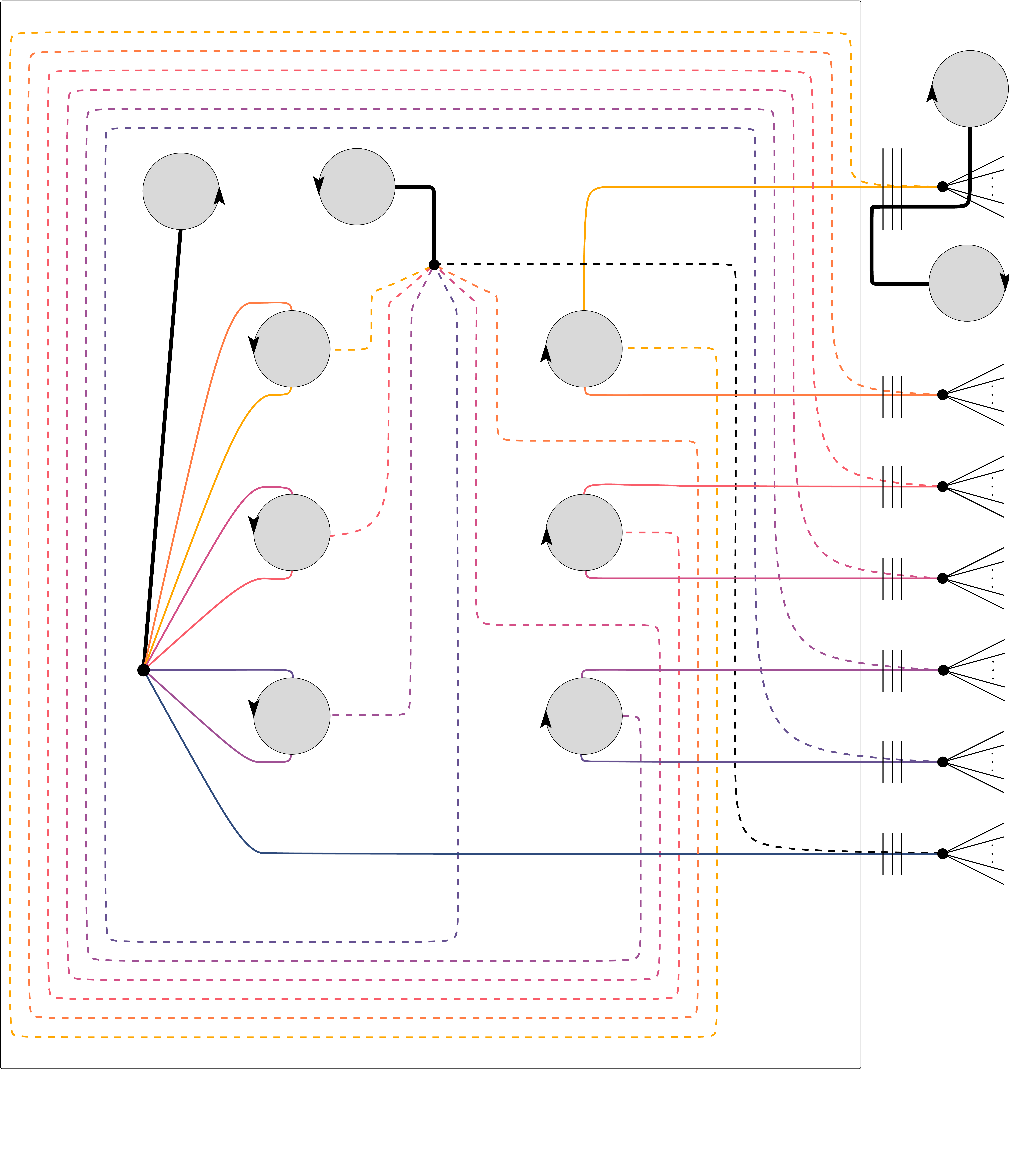_t}}
\caption{Cloning a star (orientable version) and adding an edge.}
\label{fig:790}
\end{figure}

\begin{lemma}\label{lem:or2pp}
{Let $G$ be a graph that can be thrackled on some surface $M$. {Let $u$ be a vertex of $G$, and let $d$ be the degree of $u$.} If $G''$ is the graph obtained by adding a vertex $v$ and joining it to $u$ and to all the vertices adjacent to $u$, then $G''$ can be thrackled on {$M\# S_{\ceil{(d-1)/2}+2}$}.}
\end{lemma}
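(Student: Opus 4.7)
The plan is to bootstrap from the construction in Lemma~\ref{lem:or2} (as adapted for Lemma~\ref{lem:or2p}), which starts from the given thrackle of $G$ on $M$ and produces a thrackle of $G'$ on $M \# S_{\ceil{(d-1)/2}}$. I will retain all the notation from that construction: $Q$ will denote the rectangular region containing $u$, the $d$ neighbors of $u$ will be labelled $1, 2, \ldots, d$ with corresponding edges $e_1, \ldots, e_d$, and $v$ will be placed inside $Q$ with edges $f_1, \ldots, f_d$ joining it to each neighbor of $u$. The task then reduces to drawing the new edge $uv$ and introducing two new handles $\HH$ and $\HH'$, so that the resulting drawing on $M \# S_{\ceil{(d-1)/2}+2}$ is still a thrackle.

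The requirements on $uv$ are dictated by the thrackle property: since $uv$ shares the endpoint $u$ with every $e_i$ and the endpoint $v$ with every $f_j$, it must avoid crossing any of these edges; since it is independent from every other edge of $G$, it must cross each such edge exactly once. My proposed routing, matching the picture in Figure~\ref{fig:790}, will proceed in three stages. In the first stage, $uv$ leaves $u$ inside $Q$ and hugs the edge $e_1$ tightly, travelling alongside $e_1$ out through the boundary of $Q$ and all the way to a small neighborhood of vertex~$1$; this arc will accumulate exactly one crossing with each edge crossed by $e_1$, which is precisely the set of edges of $G$ not incident to $u$ or to vertex~$1$. In the second stage, $uv$ will trace a small arc near vertex~$1$, inserted into the cyclic order of edges at vertex~$1$ in the wedge between $e_1$ and $f_1$, so as to cross exactly once each edge incident to vertex~$1$ other than $e_1$ and $f_1$. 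In the third stage, the two new handles $\HH$ and $\HH'$ will be used to carry $uv$ from the neighborhood of vertex~$1$ back into $Q$ and terminate it at $v$, without incurring any further crossings.

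The main technical obstacle I expect to confront is verifying that the two handles in the third stage really do suffice. The edges $e_1, \ldots, e_d$ emerging from $u$ form one topological barrier between the exterior of $Q$ and the interior near $v$, while the edges $f_1, \ldots, f_d$ emerging from $v$ impose a second constraint on how $uv$ may approach $v$. A single handle would make it possible to hop over the first barrier without crossings, but a second handle will be required to route the terminating arc into the correct wedge between two consecutive $f_j$'s, matching the cyclic order around $v$ that was imposed by the construction of Lemma~\ref{lem:or2}. Once both handles are placed as shown in Figure~\ref{fig:790}, a direct inspection will confirm that $uv$ crosses every edge of $G$ not incident to $u$ exactly once while avoiding every $e_i$ and every $f_j$, yielding the desired thrackle of $G''$ on $M \# S_{\ceil{(d-1)/2}+2}$.
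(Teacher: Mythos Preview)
Your overall strategy matches the paper's: start from the thrackle of $G'$ on $M\#S_{\ceil{(d-1)/2}}$ built in Lemma~\ref{lem:or2p}, then insert the edge $uv$ at the cost of two extra handles. The paper's argument is exactly this, with the routing supplied by Figure~\ref{fig:790}.

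However, your specific routing in stage~1 has a genuine gap. You claim that by hugging $e_1$ from $u$ out to vertex~$1$, the arc picks up crossings with ``each edge crossed by $e_1$, which is precisely the set of edges of $G$ not incident to $u$ or to vertex~$1$.'' That description of what crosses $e_1$ is correct only for the original thrackle of $G$; in the thrackle of $G'$ that you are actually working in, the edge $e_1$ is additionally crossed by $f_j$ for \emph{every} $j\neq 1$, since each such $f_j$ is independent of $e_1$ in $G'$. In the construction of Lemma~\ref{lem:or2}, these $f_j\times e_1$ crossings all occur inside $Q$, in the strip between the handles $\HH_1,\ldots,\HH_t$ and the right boundary of $Q$. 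Your arc, travelling alongside $e_1$ through this strip on its way out of $Q$, will therefore cross each $f_j$ with $j\neq 1$ exactly once---edges that $uv$ must \emph{not} cross. Stages~2 and~3 add no further crossings with any $f_j$, so these unwanted crossings survive to the final drawing and break the thrackle property.

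The obstacle you anticipated (getting from near vertex~$1$ back to $v$) is real but can in fact be handled with a single handle; the second handle is needed precisely to bypass the $f_j\times e_1$ crossings inside $Q$ that you overlooked. One clean way to repair your argument is to spend one handle at the very start, jumping from a clear wedge at $u$ directly to a point alongside $e_1$ just inside the right boundary of $Q$ (past all the $f_j\times e_1$ crossings), and then proceed with your stages~1--2 from there; the remaining handle then returns the arc to $v$. This is the allocation reflected in Figure~\ref{fig:790}, where the two new handles $\HH_4,\HH_5$ have one end near $v$ and the other near vertex~$1$.
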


\begin{corollary}\label{cor:Kn}
{For any integer {$n\ge 5$} we have}
\[
\Tg(K_{n})\le 
\begin{cases}
{(n^2 + 4n - 32)/4}, &\mbox{{if $n$ is even}}\\
{(n^2 + 4n - 33)/4}, &\mbox{{if $n$ is odd.}}\\
\end{cases}
\]
\end{corollary}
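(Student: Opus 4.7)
The plan is to prove the corollary by induction on $n$, with the base case $n = 5$. For the inductive step, assuming the bound for $K_{n-1}$ with $n \ge 6$, I would begin with a thrackle of $K_{n-1}$ on $S_{\Tg(K_{n-1})}$, pick any vertex $u$ (which has degree $n-2$ in $K_{n-1}$), and apply Lemma~\ref{lem:or2pp}. This adds one new vertex $v$ adjacent to $u$ and to all $n-2$ neighbors of $u$, at the cost of $\lceil (n-3)/2\rceil + 2$ additional handles; since $v$ ends up adjacent to all $n-1$ other vertices, the resulting graph is $K_n$, now thrackled on $S_{\Tg(K_{n-1}) + \lceil (n-3)/2\rceil + 2}$.

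For the base case $n=5$, I would need to establish $\Tg(K_5)\le 3$. The direct approach of starting from the thrackle of $K_4$ on $S_1$ (Figure~\ref{fig:165}) and applying Lemma~\ref{lem:or2pp} only gives $\Tg(K_5) \le 1 + \lceil 2/2\rceil + 2 = 4$, which is one more than required. To sharpen this by one, the plan is to provide a direct construction: either exhibit an explicit drawing of a thrackle of $K_5$ on $S_3$, or refine the proof of Lemma~\ref{lem:or2pp} for this small case so that the two extra handles used for routing the edge $uv$ can be replaced by a single handle. Establishing this base case in its sharper form is the main obstacle in the proof, because the two ``edge-$uv$'' handles in the generic construction of Lemma~\ref{lem:or2pp} appear to be essential in general; saving one of them must exploit the very small and symmetric structure available in $K_4$.

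Given the base case, the formula follows by a routine parity-based computation. When $n$ is even with $n\ge 6$, we have $\lceil (n-3)/2\rceil = (n-2)/2$, so the induction step adds $(n+2)/2$ handles; combined with the inductive hypothesis $\Tg(K_{n-1}) \le ((n-1)^2+4(n-1)-33)/4 = (n^2 + 2n - 36)/4$ (the formula at the odd value $n-1$), this yields $\Tg(K_n) \le (n^2 + 4n - 32)/4$. When $n$ is odd with $n\ge 7$, we have $\lceil (n-3)/2\rceil = (n-3)/2$, so the step adds $(n+1)/2$ handles; combined with the hypothesis $\Tg(K_{n-1}) \le ((n-1)^2+4(n-1)-32)/4 = (n^2 + 2n - 35)/4$ at even $n-1$, this yields $\Tg(K_n) \le (n^2 + 4n - 33)/4$. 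The two cases together cover every $n\ge 5$, completing the induction.
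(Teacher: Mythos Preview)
Your proposal is correct and matches the paper's approach essentially line for line: induction on $n$ using Lemma~\ref{lem:or2pp} for the step, with the base case $\Tg(K_5)\le 3$ handled by an explicit construction. The paper resolves the base-case obstacle exactly as you anticipated, by exhibiting a concrete thrackle of $K_5$ on $S_3$ (rather than by saving a handle in the general lemma); it also records $n=6$ as a second base case, but that is just your inductive step applied once.
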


\begin{proof}
{We proceed by induction on $n$, using {$n=5$ and $n=6$} as base cases.}

{For $n=5$ the corollary claims that $K_5$ can be thrackled in $S_3$, and this is illustrated in Figure~\ref{fig:430}. Now since $K_5$ is $4$-regular and {thrackleable} on $S_3$, Lemma~\ref{lem:or2pp} implies that $\Tg(K_6) \le {3+\ceil{(4-1)/2 + 2}}=7$, as required. Thus the corollary holds for $n=5$ and $n=6$.}


\def\tf#1{{\Scale[2.8]{#1}}}
\begin{figure}[ht!]
\centering
\def\hone{{{\mathcal H}_1}}
\def\htwo{{{\mathcal H}_2}}
\def\hthree{{{\mathcal H}_3}}
\def\hfour{{{\mathcal H}_4}}
\def\hfive{{{\mathcal H}_5}}
\def\tc#1{{\Scale[5.5]{#1}}}
\def\tx#1{{\Scale[6]{#1}}}
\def\utf#1{{\Scale[8]{#1}}}
\def\tf#1{{\Scale[4.5]{#1}}}
\def\tg#1{{\Scale[3]{#1}}}
\def\tix#1{{\Scale[5.5]{#1}}}
\def\somea{{\Scale[2.5]{\text{\rm (a)}}}}
\def\someb{{\Scale[2.5]{\text{\rm (b)}}}}
\hglue -0.2cm\scalebox{0.125}{\input{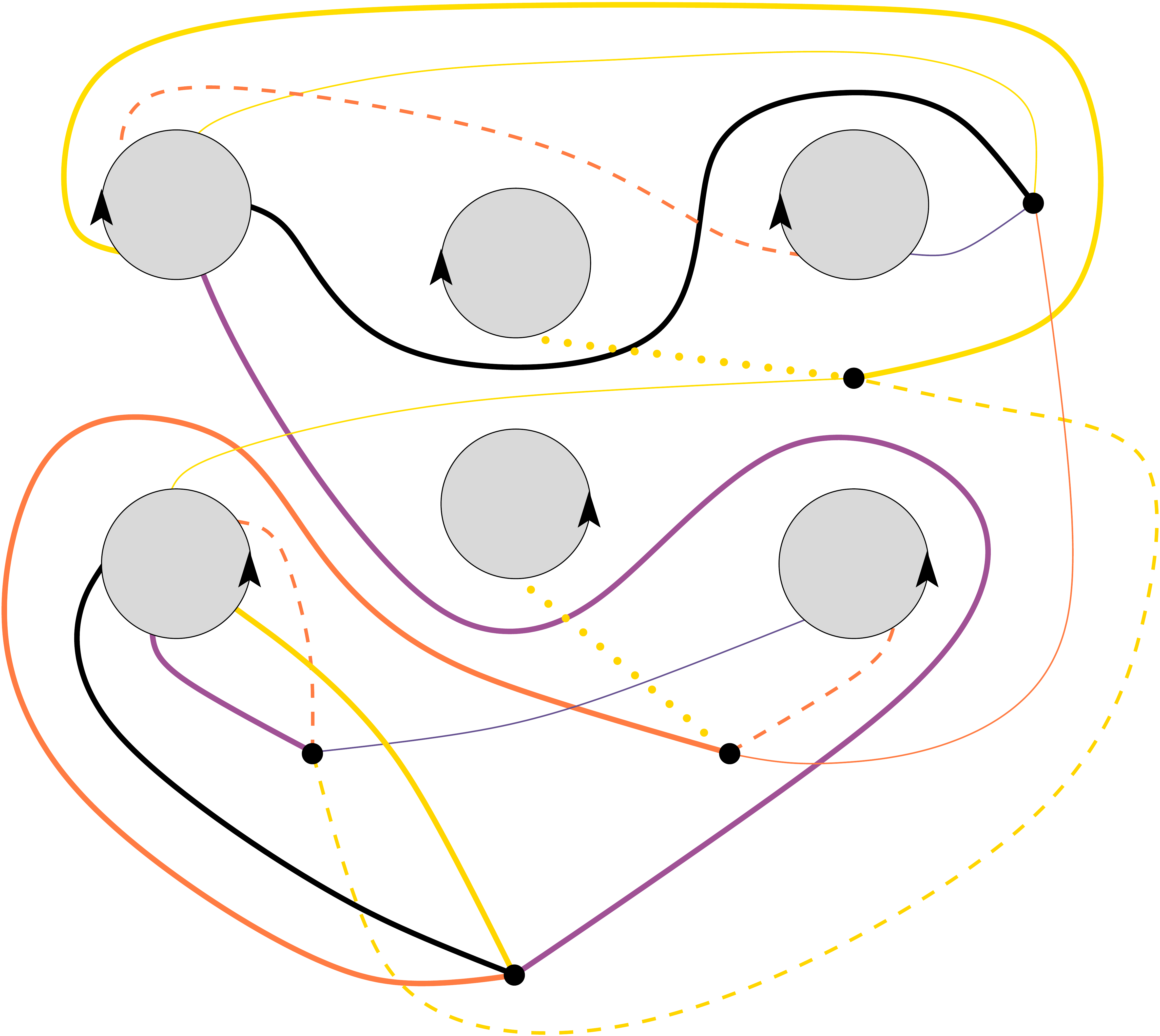_t}}
\caption{A thrackle of $K_5$ on the triple torus $S_3$. Each edge is drawn with one of four colours (black, purple, orange, or gold) and with one of four styles (thick solid, thin solid, dashed, or dotted), in such a way that an edge $ij$ with $i<j$ and an edge $k\ell$ with $k < \ell$ have the same style if and only if $i=j$, and they have the same colour if and only if $k=\ell$. In other words, the smaller endvertex of an edge determines its style, and its larger endvertex determines its colour.}
\label{fig:430}
\end{figure}

{For the inductive step we let {$k\ge 6$} be an integer, assume that the statement holds for all {$n=5,6,\ldots,k$}, and {we will show} that then it holds for $n=k+1$.}

{Suppose first that $k+1$ is even. Since $K_k$ is $(k-1)$-regular and (since $k$ is odd) by the induction hypothesis $\Tg(K_{k}) \le {(k^2 + 4k - {33})/4}$, Lemma~\ref{lem:or2pp} implies that $\Tg(K_{k+1})\le (k^2 + 4k - {33})/4 + \ceil{((k-1)-1)/2}  + 2={(k^2 + 4k - {33})/4 + (k-1)/2 + 2}={((k+1)^2 + 4(k+1) - {32})/4}$.}

{Suppose finally that $k+1$ is odd. Since $K_k$ is $(k{-}1)$-regular and (since $k$ is even) by the induction hypothesis $\Tg(K_{k}) \le {(k^2 + 4k - {32})/4}$, Lemma~\ref{lem:or2pp} implies that $\Tg(K_{k+1})\le (k^2 + 4k - {32})/4 + \ceil{((k-1)-1)/2}  + 2 ={(k^2 + 4k - {32})/4 + (k-2)/2 + 2}={((k+1)^2 + 4(k+1) - {33})/4}$.}
\end{proof}

\subsection{{Lower bounds on $\Tg(K_{m,n})$ with $\min\{m,n\}\ge 3$ and on $\Tg(K_n)$}}

{We now show that a result by Cairns and Nikolayevsky and a classical theorem by Ringel easily imply the following lower bound on $\Tg(K_{m,n})$ for $\min\{m,n\} \ge 3$. This shows that $\Tg(K_{m,n})$ is roughly at least half of the upper bound established in Corollary~\ref{cor:kMn}.}

\begin{proposition}\label{pro:lowkmn}
{For any integers $m\ge 3$ and $n\ge 3$ {we have}}
\[
{\Tg(K_{m,n}) \ge \frac{(m-2)(n-2)}{4}.}
\]
\end{proposition}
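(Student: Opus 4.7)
The plan is to derive Proposition~\ref{pro:lowkmn} as an immediate corollary of Theorem~\ref{thm:upbip}, by rearranging the edge bound into a lower bound on~$g$ and spotting the right factorization.

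First I would verify that $K_{m,n}$ satisfies the hypotheses of Theorem~\ref{thm:upbip}. The graph $K_{m,n}$ is bipartite, it is connected (so the number $k$ of connected components is $1$), and since $m,n \geq 3$ its single component has $m+n \geq 6$ vertices, which meets the requirement that every connected component have at least three vertices. Moreover, every thrackle is a generalized thrackle, so Theorem~\ref{thm:upbip} applies to any surface $S_g$ on which $K_{m,n}$ admits a thrackle.

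Next I would apply Theorem~\ref{thm:upbip} to $K_{m,n}$, which has $m+n$ vertices and $mn$ edges, with $k=1$. This yields, for every integer $g$ such that $K_{m,n}$ can be thrackled on $S_g$, the inequality
\[
mn \leq 2(m+n) - 4 + 4g.
\]
Using the identity $mn - 2m - 2n + 4 = (m-2)(n-2)$, this rearranges to $4g \geq (m-2)(n-2)$, so $g \geq (m-2)(n-2)/4$. Taking $g=\Tg(K_{m,n})$ then gives the claimed bound.

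There is essentially no obstacle: the proposition is an immediate consequence of Theorem~\ref{thm:upbip} once one notices the factorization $mn-2m-2n+4=(m-2)(n-2)$. The only thing worth checking is that the hypothesis of Theorem~\ref{thm:upbip} about the sizes of connected components is met, which is trivial for $K_{m,n}$ with $m,n\geq 3$.
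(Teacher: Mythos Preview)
Your proof is correct but takes a different route from the paper. The paper combines two external results: the Cairns--Nikolayevsky theorem that a bipartite graph admits a generalized thrackle on an orientable surface $S$ if and only if it embeds in $S$ (so the thrackle genus of a bipartite graph is at least its ordinary orientable genus), together with Ringel's classical formula $\gamma(K_{m,n})=\lceil(m-2)(n-2)/4\rceil$. You instead apply Theorem~\ref{thm:upbip} directly to $K_{m,n}$ and rearrange. Your approach is shorter and stays entirely within results already stated in the paper; the paper's route, while invoking more machinery, makes transparent that the lower bound on $\Tg(K_{m,n})$ coincides with the ordinary genus of $K_{m,n}$.
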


\begin{proof}
{Cairns and Nikolayevsky proved that a bipartite graph $G$ admits a generalized thrackle on an orientable surface $S$ if and only if $G$ can be embedded on $S$~\cite[Theorem 3]{cairnsbounds}. This implies that if $G$ is bipartite then its thrackle genus is at least its orientable genus $\gamma(G)$. Ringel proved that $\gamma(K_{m,n}) = (m-2)(n-2)/4$ whenever $m\ge 2$ and $n\ge 2$~\cite{RingelBip}, and so it follows that $\Tg(K_{m,n}) \ge (m-2)(n-2)/4$.}
\end{proof}

{We finally note that another result by Cairns and Nikolayevsky (stated above as Theorem~\ref{thm:upgen}) implies the following lower bound on $\Tg(K_{n})$. This shows that $\Tg(K_{n})$ is roughly at least half of the upper bound established in Corollary~\ref{cor:Kn}.}

\begin{proposition}\label{pro:lowkn}
{For all integers $m\ge 3$ and $n\ge 3$ {we have}}
\[
{\Tg(K_{n}) \ge \frac{n^2 -5n + 4}{8}.}
\]
\end{proposition}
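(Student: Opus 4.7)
The plan is to apply Theorem~\ref{thm:upgen} directly to $K_n$, since $K_n$ is connected and every thrackle is in particular a generalized thrackle. First, I would set $g := \Tg(K_n)$, so by the definition of thrackle genus there exists a thrackle of $K_n$ on $S_g$, and hence a generalized thrackle of $K_n$ on $S_g$.

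Next, I would note that $K_n$ has $n$ vertices and $m = \binom{n}{2} = n(n-1)/2$ edges, and that $K_n$ is connected (for $n \ge 1$), so the hypotheses of Theorem~\ref{thm:upgen} are satisfied. Applying that theorem with the above values of $n$ and $m$ yields the inequality
\[
\frac{n(n-1)}{2} \le 2n - 2 + 4g.
\]

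Finally, I would solve this inequality for $g$. Rearranging gives
\[
4g \ge \frac{n(n-1)}{2} - 2n + 2 = \frac{n^2 - n - 4n + 4}{2} = \frac{n^2 - 5n + 4}{2},
\]
and dividing by $4$ gives $g \ge (n^2 - 5n + 4)/8$, as claimed. There is no genuine obstacle here; the statement is an immediate algebraic consequence of Theorem~\ref{thm:upgen} applied to the complete graph, and the ``main step'' is simply recognizing that Theorem~\ref{thm:upgen} applies (which requires only that $K_n$ be connected). The only minor care needed is to observe that, since $\Tg(K_n)$ must be a nonnegative integer, the bound is nontrivial only once $(n^2-5n+4)/8 > 0$, which happens for $n \ge 5$; for $n=3,4$ the inequality still holds formally because the right-hand side is nonpositive.
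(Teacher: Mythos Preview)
Your proof is correct and takes essentially the same approach as the paper: both apply Theorem~\ref{thm:upgen} directly to $K_n$ (which is connected with $\binom{n}{2}$ edges) and rearrange the resulting inequality to obtain the lower bound on $g=\Tg(K_n)$.
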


\begin{proof}
Theorem~\ref{thm:upgen} implies that if $G$ is a connected graph with $n$ vertices and $m$ edges then $\Tg(G) \ge m/4 - n/2 + 1/2$. Therefore $\Tg(K_n) \ge \binom{n}{2}/4 -n/2 + 1/2 = (n^2 -5n + 4)/8$.
\end{proof}

\subsection{{A lower bound and a conjecture on $\Tg(K_{2,n})$}}
{We start by recalling that Cairns and Nikolayevsky proved that if $T$ is a thrackle of a graph $G$ on some surface, and $C$ is a $4$-cycle of $G$, then the restriction of $T$ to $C$ is nontrivial in $\ZZ_2$-homology~\cite[Lemma 5(a)]{cairnsbounds}.  From this it easily follows that $\Tg(K_{2,n}) = \Omega(\log{n})$. As we now show, a result by Przytycki~{\cite{prz}} implies a better lower bound.
}

\begin{proposition}\label{pro:k2n}
\[
{\Tg(K_{2,n}) = \Omega(n^{1/3}).}
\]
\end{proposition}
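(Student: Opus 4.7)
The plan is to combine the Cairns--Nikolayevsky homological obstruction with Przytycki's polynomial bound~\cite{prz} on the number of arcs on a surface with prescribed endpoints and bounded pairwise intersection.

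Fix a thrackle $T$ of $K_{2,n}$ on $S_g$ with $g=\Tg(K_{2,n})$. Let $\{u,v\}$ and $\{w_1,\ldots,w_n\}$ be the two bipartition classes, and for each $i$ let $P_i$ be the arc from $u$ to $v$ obtained by concatenating the edges $uw_i$ and $vw_i$ in $T$. First I would verify that for each $i\ne j$ the arcs $P_i$ and $P_j$ meet in exactly two interior points: the pairs $(uw_i,uw_j)$ and $(vw_i,vw_j)$ share a vertex and so contribute no interior intersection (property (i) of thrackles), while the remaining pairs $(uw_i,vw_j)$ and $(vw_i,uw_j)$ are independent edges of $K_{2,n}$ and thus each cross exactly once (property (ii)).

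Next I would argue that the arcs $P_1,\ldots,P_n$ are pairwise non-isotopic rel $\{u,v\}$. If $P_i$ and $P_j$ were isotopic, then the closed curve $P_i\cup P_j^{-1}$, which traces the $4$-cycle on vertices $u,w_i,v,w_j$, would be null-homotopic, and in particular trivial in $H_1(S_g;\ZZ_2)$; but this contradicts the criterion of Cairns and Nikolayevsky~\cite[Lemma 5(a)]{cairnsbounds} recalled in the paragraph preceding the proposition, which asserts that the image of every $4$-cycle of the thrackled graph is nontrivial in $\ZZ_2$-homology. Consequently, $T$ gives rise to $n$ pairwise non-isotopic arcs between two fixed points of $S_g$ whose pairwise geometric intersection numbers are at most $2$.

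The final step is to invoke Przytycki's theorem~\cite{prz}: the number of pairwise non-isotopic arcs on $S_g$ with endpoints at two prescribed marked points and pairwise intersecting in a bounded number of points is $O(g^3)$. Hence $n=O(g^3)$, which gives $\Tg(K_{2,n})=g=\Omega(n^{1/3})$. The main obstacle I foresee is matching our setting precisely to the statement in~\cite{prz}, since Przytycki's main theorem is most cleanly stated for arcs that pairwise intersect \emph{at most once}. To accommodate arcs intersecting in up to two points, one option is to appeal to the standard extension of Przytycki's polynomial bound to arcs with pairwise intersection at most $k$ (for any fixed $k$); an alternative is a small surgery argument that, at the cost of a constant multiplicative factor, produces from $(P_i)$ a family of arcs between $u$ and $v$ pairwise intersecting at most once, to which the original theorem applies directly.
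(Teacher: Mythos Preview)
Your proposal is correct and follows essentially the same route as the paper: form the $n$ arcs $P_i$ from $u$ to $v$ through $w_i$, note they pairwise intersect exactly twice, use the Cairns--Nikolayevsky lemma to conclude they are pairwise non-homotopic (the paper phrases this as ``pairwise non-homologous mod~$2$, hence non-homotopic'' on the twice-punctured surface), and then invoke Przytycki to obtain $n=O(g^3)$. The obstacle you anticipate does not arise: the paper cites \cite[Theorem~1.5]{prz} directly for arcs on $S_g$ with two punctures, so no surgery or auxiliary extension is needed.
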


\begin{proof}
{Let $T$ be a thrackle of $K_{2,n}$ on $S_g$. If we remove the vertices in the partite class of size $2$, we obtain {a collection $\aa$ of simple arcs that pairwise intersect at exactly two points, hosted at the orientable surface of genus $g$ punctured twice, where the endpoints of each arc in $\aa$ are the two punctures of the surface}. It follows from {a result of Cairns and Nikolayevsky}~\cite[Lemma 5(a)]{cairnsbounds} that the $n$ arcs in $\aa$ are pairwise {non-homologous} mod 2, and so they are pairwise non-homotopic. {By a result of Przytycki~\cite[Theorem 1.5]{prz} then it follows that} $n=O(g^3)$.}
\end{proof}

{Having the nontrivial upper and lower bounds on $\Tg(K_{2,n})$ in Corollary~\ref{cor:k2n} and Proposition~\ref{pro:k2n}, respectively, we slightly lean towards the former one.}

\begin{conjecture}\label{conj:k2n}
{$\Tg(K_{2,n})=\Theta(n)$.}
\end{conjecture}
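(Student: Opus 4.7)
Conjecture~\ref{conj:k2n} asserts $\Tg(K_{2,n}) = \Theta(n)$. Since Corollary~\ref{cor:k2n} already gives the upper bound $\Tg(K_{2,n}) \le \lceil (n-1)/2\rceil$, the content of the conjecture is the matching lower bound $\Tg(K_{2,n}) = \Omega(n)$, and a natural sharp target is $\Tg(K_{2,n}) \ge \lceil (n-1)/2 \rceil$.

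The plan is to upgrade the homological argument behind Proposition~\ref{pro:k2n}. Fix a thrackle of $K_{2,n}$ on $S_g$ with partite classes $\{s,t\}$ and $\{1,\ldots,n\}$, and for each $i\in\{1,\ldots,n\}$ let $P_i$ denote the path $s\text{--}i\text{--}t$. I would consider the $\ZZ_2$-linear map
\[
\phi\colon \ker\!\bigl(\textstyle\sum\colon \ZZ_2^{n}\to\ZZ_2\bigr)\longrightarrow H_1(S_g;\ZZ_2)
\]
sending an even-sized subset $S\subseteq\{1,\ldots,n\}$ to the class of the $1$-chain $\sum_{i\in S} P_i$; this chain is indeed a cycle because its boundary is $|S|\cdot(s+t)=0$ modulo $2$. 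Since $\dim_{\ZZ_2} H_1(S_g;\ZZ_2) = 2g$, injectivity of $\phi$ would force $n-1 \le 2g$, yielding exactly the tight bound $\Tg(K_{2,n}) \ge \lceil (n-1)/2\rceil$.

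The case $|S|=2$ of the injectivity is precisely the $4$-cycle nontriviality statement of Cairns--Nikolayevsky (\cite[Lemma 5(a)]{cairnsbounds}) that was already exploited in Proposition~\ref{pro:k2n}. To extend this to arbitrary even $|S|$, I would try two complementary routes. The first is dualization: pair $\phi(S)$ against cycles on $S_g$ via the intersection form, hoping to extract from the rotation systems at $s$ and at $t$ (both of which are tightly constrained in a thrackle, since each edge from $s$ must cross each edge from $t$ to a different vertex exactly once) enough detecting cycles to separate the classes $\phi(S)$. The second is to refine Przytycki's~\cite{prz} arc-count in our setting by using that the pairwise intersection of $P_i$ and $P_j$ is exactly $2$ (rather than merely at most $2$) and is consistent with rigid cyclic rotations at $s$ and~$t$, in the hope of boosting his $O(g^3)$ bound to a linear one.

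I expect the main obstacle to lie in the regime $4 \le |S| \le n$: the Cairns--Nikolayevsky geometric argument for $4$-cycles does not obviously extend to unions of many arcs, and a priori a generic collection of arcs on a surface can display unexpected mod-$2$ bounding phenomena that would force $\ker\phi$ to be large. The crux of the problem is therefore to isolate a new structural consequence of the thrackle property---most plausibly one linking the rotation data at $s$ and $t$ to the mod-$2$ homology classes of the arcs $P_i$---that rules out such cancellations; without such an ingredient, the approach seems unlikely to improve substantially on the $\Omega(n^{1/3})$ bound of Proposition~\ref{pro:k2n}.
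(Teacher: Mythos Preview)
This statement is labelled a \emph{conjecture} in the paper, and the authors explicitly leave it open; there is no proof in the paper to compare your proposal against. What you have written is accordingly not a proof but a research outline, and you say so yourself in the final paragraph.

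Your framework via the map $\phi\colon \ker(\sum)\to H_1(S_g;\ZZ_2)$ is well-defined and natural, and you are right that the $|S|=2$ case is exactly the Cairns--Nikolayevsky $4$-cycle lemma. The genuine gap---which you already name---is that nothing in the thrackle axioms currently rules out cancellation in $\ZZ_2$-homology among the classes $[P_i+P_j]$: a priori several of these classes could coincide or satisfy $\ZZ_2$-linear relations, making $\ker\phi$ large. Note in passing that you do not actually need full injectivity: a uniform bound of the form $\dim\ker\phi = o(n)$ over all thrackles of $K_{2,n}$ would already force $n-1-o(n)\le \dim H_1(S_g;\ZZ_2)=2g$ and hence $\Tg(K_{2,n})=\Theta(n)$. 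But even this weaker target appears to require a new structural ingredient beyond~\cite[Lemma~5(a)]{cairnsbounds} and beyond Przytycki's arc-counting~\cite{prz}. Your two suggested routes (dualising via the intersection form using the rotation data at $s$ and $t$, or sharpening Przytycki for arcs with \emph{exactly} two pairwise intersections and controlled rotations at the endpoints) are reasonable directions, but as you concede, neither currently closes the gap; the problem remains open.
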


\subsection{{A final remark and two open questions}}

{As a final observation on the relationship between the thrackle genus and the orientable genus of a graph, let us note that in general $\Tg(G)$ cannot be bounded {from} above by any function of $\gamma(G)$. Indeed, Proposition~\ref{pro:k2n} implies that there are graphs with orientable genus $0$ and arbitrarily large thrackle genus.
}

{We close with two questions on the thrackle genus of arbitrary graphs. {Let us note that the upper bound in Corollary~\ref{cor:Kn} trivially implies the same quadratic upper bound on the thrackle genus of any graph $G$ in the number of its vertices.} This leads us to ask whether the thrackle genus of an arbitrary graph $G$ is at most linear in the number $m$ of its edges.
}

\begin{question}\label{que:open1}
{Is $\Tg(G) = O(m)$?}
\end{question}

\begin{question}\label{que:open2}
How much can the thrackle genus ``jump'' by the addition of an edge? That is, if $G$ is a graph with $n$ vertices and $m$ edges, and $u,v$ are nonadjacent vertices in $G$, what is the largest possible value (in terms of $m$ and $n$) of $\Tg(G + {uv})-\Tg(G)$?
\end{question}

\section*{Acknowledgments}

We thank Kebin Velasquez for helpful discussions. We thank G\'eza T\'oth for providing us with a copy of a paper by Green and Ringeisen. The second author was supported by Project 23-04949X of the Czech Science Foundation (GA\v{C}R). The third author was supported by CONACYT under Proyecto Ciencia de Frontera 191952. 

\bibliographystyle{abbrv}

\bibliography{references.bib}

\begin{thebibliography}{10}

\bibitem{oswinthrackles}
O.~Aichholzer, L.~Kleist, B.~Klemz, F.~Schr\"{o}der, and B.~Vogtenhuber.
\newblock On the edge-vertex ratio of maximal thrackles.
\newblock In {\em Graph Drawing and Network Visualization: 27th International
  Symposium, GD 2019, Prague, Czech Republic, September 17–20, 2019,
  Proceedings}, pages 482--495, Berlin, Heidelberg, 2019. Springer-Verlag.

\bibitem{cairnsk5}
G.~Cairns, M.~McIntyre, and Y.~Nikolayevsky.
\newblock The {T}hrackle conjecture for {$K_5$} and {$K_{3,3}$}.
\newblock In {\em Towards a theory of geometric graphs}, volume 342 of {\em
  Contemp. Math.}, pages 35--54. Amer. Math. Soc., Providence, RI, 2004.

\bibitem{cairnsbounds}
G.~Cairns and Y.~Nikolayevsky.
\newblock Bounds for generalized thrackles.
\newblock {\em Discrete Comput. Geom.}, 23(2):191--206, 2000.

\bibitem{cairnsgeneralized}
G.~Cairns and Y.~Nikolayevsky.
\newblock Generalized thrackle drawings of non-bipartite graphs.
\newblock {\em Discrete Comput. Geom.}, 41(1):119--134, 2009.

\bibitem{cairnsouterplanar}
G.~Cairns and Y.~Nikolayevsky.
\newblock Outerplanar thrackles.
\newblock {\em Graphs Combin.}, 28(1):85--96, 2012.

\bibitem{cottingham}
J.~E. Cottingham and R.~D. Ringeisen.
\newblock Adding and deleting crossings in drawings of graphs.
\newblock {\em Ars Comb.}, 55:167--179, 2000.

\bibitem{croftthrackles}
H.~T. Croft, K.~J. Falconer, and R.~K. Guy.
\newblock {\em Unsolved problems in geometry}.
\newblock Problem Books in Mathematics. Springer-Verlag, New York, 1991.
\newblock Unsolved Problems in Intuitive Mathematics, II.

\bibitem{fulekthrackles2}
R.~Fulek and J.~Pach.
\newblock A computational approach to {C}onway's thrackle conjecture.
\newblock {\em Comput. Geom.}, 44(6-7):345--355, 2011.

\bibitem{fulekthrackles}
R.~Fulek and J.~Pach.
\newblock Thrackles: {A}n improved upper bound.
\newblock {\em Discrete Appl. Math.}, 259:226--231, 2019.

\bibitem{goddynxu}
L.~Goddyn and Y.~Xu.
\newblock On the bounds of {C}onway's thrackles.
\newblock {\em Discrete Comput. Geom.}, 58(2):410--416, 2017.

\bibitem{greenringeisen}
J.~E. Green and R.~D. Ringeisen.
\newblock Combinatorial drawings and thrackle surfaces.
\newblock In {\em Graph theory, combinatorics, algorithms and applications.
  Vol. 2. Proceedings of the seventh quadrennial international conference on
  the theory and applications of graphs, Kalamazoo, MI, USA, June 1-5, 1992},
  pages 999--1009. New York, NY: Wiley, 1995.

\bibitem{lovaszthrackles}
L.~Lov\'{a}sz, J.~Pach, and M.~Szegedy.
\newblock On {C}onway's thrackle conjecture.
\newblock {\em Discrete Comput. Geom.}, 18(4):369--376, 1997.

\bibitem{miserehthrackles0}
G.~Misereh and Y.~Nikolayevsky.
\newblock Annular and pants thrackles.
\newblock {\em Discrete Math. Theor. Comput. Sci.}, 20(1):Paper No. 16, 17,
  2018.

\bibitem{miserehthrackles}
G.~Misereh and Y.~Nikolayevsky.
\newblock Thrackles containing a standard musquash.
\newblock {\em Australas. J. Combin.}, 70:168--184, 2018.

\bibitem{janosthrackles}
J.~Pach and E.~Sterling.
\newblock Conway's conjecture for monotone thrackles.
\newblock {\em Amer. Math. Monthly}, 118(6):544--548, 2011.

\bibitem{pelsmajereven}
M.~J. Pelsmajer, M.~Schaefer, and D.~\v{S}tefankovi\v{c}.
\newblock Removing even crossings on surfaces.
\newblock {\em European J. Combin.}, 30(7):1704--1717, 2009.

\bibitem{prz}
P.~Przytycki.
\newblock Arcs intersecting at most once.
\newblock {\em Geom. Funct. Anal.}, 25(2):658--670, 2015.

\bibitem{RingelBip}
G.~Ringel.
\newblock Das {Geschlecht} des vollst{\"a}ndigen paaren {Graphen}.
\newblock {\em Abh. Math. Semin. Univ. Hamb.}, 28:139--150, 1965.

\bibitem{woodallthrackles}
D.~R. Woodall.
\newblock Thrackles and deadlock.
\newblock In {\em Combinatorial {M}athematics and its {A}pplications ({P}roc.
  {C}onf., {O}xford, 1969)}, pages 335--347. Academic Press, London, 1971.

\bibitem{xuthrackles}
Y.~Xu.
\newblock A new upper bound for {C}onway's thrackles.
\newblock {\em Appl. Math. Comput.}, 389:Paper No. 125573, 3, 2021.

\end{thebibliography}

\end{document}